\setlist[enumerate]{leftmargin=.5in}
\setlist[itemize]{leftmargin=.5in}
\def\N{\mathbb{N}}
\def\R{\mathbb{R}}
\def\dir{\texttt{DGFM3}}
\def\aiuse#1{{\vskip5.5pt\noindent \textcolor{jobcolor}{\fontsize{9}{11}\selectfont Declaration of AI use.}\fontsize{8}{11}\selectfont\enskip #1}}
\newcommand\footnoteref[1]{\protected@xdef\@thefnmark{\ref{#1}}\@footnotemark}
\def\addlegendimage{\csname pgfplots@addlegendimage\endcsname}
\def\first#1{{\setlength{\fboxsep}{2pt}\fcolorbox{black}{black!15}{\textbf{#1}}}}
\def\second#1{{\setlength{\fboxsep}{2pt}\fcolorbox{black!50}{black!5}{#1}}}
\newtheorem{definition}{\bf Definition}[section]
\newtheorem{proposition}{\bf Proposition}[section]
\newtheorem{remark}{\bf Remark}[section]
\begin{document}
	
	\title{Gradient flow-based modularity maximization for community detection in multiplex networks}
	
	\author{
		Kai Bergermann$^{1}$ and Martin Stoll$^{1}$}
	
	\address{$^{1}$Department of Mathematics, Chemnitz University of Technology, 09107 Chemnitz, Germany}
	
	\subject{
		applied mathematics, computational mathematics
	}
	
	\keywords{Community detection, multiplex networks, modularity, total variation, matrix-valued ODEs, matrix exponential}
	
	\corres{Martin Stoll\\
		\email{martin.stoll@mathematik.tu-chemnitz.de}}
	
	\begin{abstract}
		We propose two methods for the unsupervised detection of communities in undirected multiplex networks.
		These networks consist of multiple layers that record different relationships between the same entities or incorporate data from different sources.
		Both methods are formulated as gradient flows of suitable energy functionals: the first (\texttt{MPBTV}) builds on the minimization of a balanced total variation functional, which we show to be equivalent to multiplex modularity maximization, while the second (\dir) directly maximizes multiplex modularity.
		The resulting non-linear matrix-valued ordinary differential equations (ODEs) are solved efficiently by a graph Merriman--Bence--Osher (MBO) scheme.
		Key to the efficiency is the approximate integration of the discrete linear differential operators by truncated eigendecompositions in the matrix exponential function.
		Numerical experiments on several real-world multiplex networks show that our methods are competitive with the state of the art with respect to various metrics.
		Their major benefit is a significant reduction of computational complexity leading to runtimes that are orders of magnitude faster for large multiplex networks.
	\end{abstract}
	



\begin{fmtext}
\end{fmtext}
\maketitle

\section{Introduction}\label{sec:intro}

 Many complex systems can be modeled by graphs or networks\footnote{We use the terms graph and network synonymously throughout the manuscript.} that record pairwise relations between its entities.
 Albeit their conceptual simplicity, the study of networked complex systems has led to a rich body of work analyzing structural and dynamical properties \cite{newman2003structure,boccaletti2006complex,estrada2012structure}.
 Recent years have witnessed the trend in network science to move beyond pairwise interactions or single layers and instead employ generalized graph models such as hypergraphs, simplicial complexes, or multilayer networks \cite{kivela2014multilayer,boccaletti2014structure,battiston2020networks,battiston2021physics,bick2023higher}.

 In this work, we consider undirected and possibly weighted multiplex networks in which edges within layers represent different relationships between the same set of nodes or incorporate data from different sources and edges across layers are restricted to connecting instances of the same entity in different layers \cite{kivela2014multilayer,boccaletti2014structure}.
 In recent years, many well-studied techniques for single-layer networks have been extended to multiplex or multilayer networks.
 One common difficulty in generalizing methods for the structural analysis of multilayer networks such as community detection, centrality analysis, or core-periphery detection is that traditional techniques may obtain conflicting results on the different network layers.
 For instance, an individual in a social multilayer network in which layers represent different social media platforms may be connected to different people in different layers and hence be assigned to different communities.
 Similarly, in temporal networks community affiliation may change over time.
 One current line of work to circumvent this issue is the automated detection of informative and noisy layers as well as the optimization of weights that characterize the importance of layers \cite{venturini2023learning,bergermann2024nonlinear}.
 There are, however, examples in the literature illustrating that information is lost when aggregating multilayer networks into single-layer networks, cf., e.g., \cite[Section VI.A.]{bergermann2022fast}.
 An alternative approach that we pursue in this work is to perform the structural analysis on the node-layer pair level, i.e., compute centrality or coreness scores for each node-layer pair or allow physical nodes to change community labels across layers \cite{mucha2010community,de2015identifying,boutemine2017mining,kuncheva2015community,taylor2021tunable,bergermann2022fast,bergermann2024core}.
 
 \subsection{Background}
 
 The unsupervised detection of communities has been one of the the most-studied problems in network science and graph-based learning in the past two decades.
 Although different definitions, objectives, and quality functions exist in the literature, a community, cluster, or module is generally referred to as a strongly connected subgraph of the overall network.
 In the early $2000$s, Girvan and Newman observed a hierarchical organization of community structure in real-world networks, proposed several algorithms for their detection, and introduced a scalar measure called modularity to quantify the quality of a given community partition \cite{girvan2002community,newman2003structure,newman2004finding,newman2006finding,newman2006modularity}.
 The proposed algorithms were based on removing central edges or computing eigenvectors of certain matrix representations of the network.
 Other community detection methods developed later were based on, e.g., random walks \cite{pons2006computing,rosvall2008maps} or performing k-means clustering on the eigenvectors to the smallest eigenvalues of the graph Laplacian matrix \cite{von2007tutorial}, cf.~\cite{fortunato2010community} for a survey as of the year $2010$.
 
 The assignment of nodes to communities generally represents a combinatorial optimization problem that is infeasible to solve exactly even for moderate numbers of communities and network entities.
 Moreover, in the absence of \emph{a priori} knowledge, the number of communities represented in a network is typically unknown making its determination part of the problem.
 Although proven to be strongly NP-complete \cite{brandes2007modularity}, one successful class of community detection methods in single-layer networks aims at maximizing the quality measure modularity, cf.~\Cref{rem:single-layer_modularity}, that was initially proposed to determine the ``best'' partition from a hierarchy of partitions on different resolution levels, which involves the choice of the ``best'' number of communities \cite{newman2004finding}.
 In these methods, the combinatorial problem is typically tackled by heuristic approaches \cite{blondel2008fast,mucha2010community,traag2019louvain,venturini2022variance} or relaxed formulations \cite{hu2013method,boyd2018simplified,tudisco2018community,zhang2018sparse,cristofari2020total}.
 
 Such relaxed formulations often rely on elegant equivalences.
 For example, it has been shown that single-layer modularity coincides with the auto-covariance function of random walks on networks at certain times \cite{delvenne2010stability,lambiotte2014random,lambiotte2021modularity}.
 Furthermore, maximizing modularity is equivalent to maximum likelihood estimation in planted partition stochastic block models (SBMs) \cite{newman2016equivalence}.
 The authors of \cite{cristofari2020total} show the equivalence of modularity maximization and total variation minimization and realize the latter by a non-linear active-set optimization approach.
 Some years earlier, the equivalence of modularity maximization with certain graph cut problems and balanced total variation minimization has been shown by Hu et.\ al.\ \cite{hu2013method} and Boyd et.\ al.\ \cite{boyd2018simplified}.
 They proposed the modularity Merriman--Bence--Osher (MBO) scheme \cite{hu2013method} and the Pseudospectral Balanced total variation (BTV) MBO scheme \cite{boyd2018simplified} to minimize suitable Ginzburg--Landau-type energy functionals via gradient flows.
 These approaches transform the community detection problem into Allen--Cahn-type matrix-valued ordinary differential equations (ODEs) that have also been studied in the context of image inpainting and semi-supervised learning on graphs \cite{bertozzi2012diffuse,hu2013method,garcia2014multiclass,boyd2018simplified,budd2021classification,bergermann2021semi}.
 
 The generalization of single-layer community detection methods to multilayer networks started with Mucha et.\ al.'s seminal work \cite{mucha2010community} proposing a definition of modularity for time-dependent, multiscale, and multiplex networks that gives rise to a generalized version of the single-layer Louvain method \cite{blondel2008fast} implemented in the GenLouvain software package \cite{jutla2011generalized}.
 In subsequent years, various other techniques for community detection in multiplex and multilayer networks based on random walks \cite{de2015identifying,kuncheva2015community}, label propagation \cite{boutemine2017mining}, SBMs \cite{taylor2016enhanced,pamfil2019relating}, spectral clustering \cite{mercado2018power}, and versions of GenLouvain \cite{bazzi2016community,venturini2022variance} have been proposed in the literature, cf.~the two survey articles \cite{magnani2021community,huang2021survey}.
 
 \subsection{Contribution}
 
 In this work, we propose two approaches for community detection in multiplex networks that are based on minimizing suitable energy functionals via gradient flows.
 The first method that we call multiplex balanced total variation (\texttt{MPBTV}) generalizes the single-layer BTV method \cite{boyd2018simplified} and builds on the minimization of a balanced total variation functional, which we show to be equivalent to multiplex modularity maximization.
 The second method directly maximizes multiplex modularity by means of a gradient flow.
 We discuss the efficient solution of the resulting matrix-valued ODEs by a graph MBO scheme with truncated eigendecompositions of the respective discrete linear differential operator.
 Numerical experiments illustrate that our methods are competitive with the state of the art in terms of multiplex modularity, classification accuracy, and NMI.
 Their main advantage over state-of-the-art methods is a significant reduction of computational complexity leading to runtimes that are orders of magnitude lower for large multiplex networks.
 
 \subsection{Outline}
 
 The remainder of the manuscript is structured as follows.
 \Cref{sec:multiplex_networks} introduces multiplex networks alongside their linear algebraic representation.
 \Cref{sec:modularity} introduces multiplex modularity as an objective for community detection in multiplex networks.
 The two methods proposed in this manuscript are introduced in \Cref{sec:MPBTV,sec:dir}.
 \Cref{sec:MPBTV} includes the equivalence proof of multiplex modularity maximization and balanced total variation minimization and introduces the means to solving the latter by the minimization of a suitable Ginzburg--Landau-type functional.
 We discuss the efficient numerical solution of the resulting matrix-valued ODEs in \Cref{sec:numerical_solution_of_ODEs} before presenting numerical experiments on multiplex networks of small to large scale in \Cref{sec:numerical_experiments}.

\section{Multiplex networks}\label{sec:multiplex_networks}

 In this work, we consider node-aligned, layer-coupled, undirected, and possibly weighted multiplex networks $\mathcal{G} = (\widetilde{\mathcal{V}}, \mathcal{E}^{(1)}, \dots, \mathcal{E}^{(L)}, \widetilde{\mathcal{E}})$.
 Here, $\widetilde{\mathcal{V}} = \{1,\dots,n\}$ denotes the set of $|\widetilde{\mathcal{V}}| = n$ consecutively numbered physical nodes representing the network's entities that are present in each of the $L$ layers of the multiplex network \cite{kivela2014multilayer}.
 We also define the set $\mathcal{V} = \{ (j,\ell) : j=1,\dots,n,~\ell=1,\dots,L \}$ of $|\mathcal{V}| = nL$ node-layer pairs representing the instances of physical nodes in the different layers as well as $\mathcal{V}^{(\ell)} = \{ (j,\ell) : j=1,\dots,n \}$ representing each physical node in layer $\ell$.
 
 The layers of the multiplex network may record different types of interactions between the physical nodes or incorporate data from different sources that should be treated separately in order to prevent a loss of information that often occurs in aggregation, cf., e.g., \cite{kivela2014multilayer,boccaletti2014structure,de2015identifying,bergermann2022fast}.
 Pairwise connections between the node-layer pairs within layers are recorded in the intra-layer edge sets $\mathcal{E}^{(\ell)}\subset\mathcal{V}^{(\ell)}\times \mathcal{V}^{(\ell)}$ for $\ell=1,\dots,L$.
 In contrast to general multilayer networks that additionally allow arbitrary connections between node-layer pairs from different layers, multiplex networks only possess layer-coupled inter-layer edges $\widetilde{\mathcal{E}}$, i.e., edges across layers between node-layer pairs representing the same physical node $i\in\widetilde{\mathcal{V}}$.
 
 Many tools from single-layer network analysis are based on linear algebraic network representations in terms of adjacency or graph Laplacian matrices.
 Multiplex networks $\mathcal{G}$ possess multiple representations in terms of matrices or tensors and in this work we adopt the framework based on supra-adjacency and supra-Laplacian matrices \cite{de2013mathematical,kivela2014multilayer,boccaletti2014structure}.
 \begin{definition}\label{def:multiplex_network_la}
 	The symmetric indefinite matrices $\bm{A}^{(\ell)}\in\R_{\geq 0}^{n\times n}$ represent the intra-layer adjacency matrices of layers $\ell=1,\dots,L$ in which a non-zero entry $\bm{A}^{(\ell)}_{ij}$ denotes the presence of an edge between nodes $i$ and $j$ in layer $\ell$.
 	Furthermore, the diagonal matrices $\bm{C}^{(k\ell)}\in\R_{\geq 0}^{n\times n}$ with $k,\ell=1,\dots,L$ and $k\neq \ell$ record the inter-layer edge weights between all node-layer pairs in layers $k$ and $\ell$.
 	We restrict ourselves to layer-coupled multiplex networks in which we can write $\bm{C}^{(k\ell)} = \omega\widetilde{\bm{A}}_{k\ell}\bm{I}$, where $\bm{I}\in\R^{n\times n}$ denotes the identity matrix, $\widetilde{\bm{A}}_{k\ell}$ is the corresponding entry in the symmetric layer coupling matrix $\widetilde{\bm{A}}\in\R_{\geq 0}^{L\times L}$ that has zero entries on the diagonal, and the inter-layer coupling parameter $\omega\geq 0$.
 	
 	With these ingredients, we can define the supra-adjacency matrix
	\begin{align}\label{eq:supra-adjacency}
	\begin{split}
 	\bm{A} & := \begin{bmatrix}
 	\bm{A}^{(1)} & \bm{C}^{(12)} & \dots & \bm{C}^{(1L)}\\
 	\bm{C}^{(21)} & \bm{A}^{(2)} & \dots & \bm{C}^{(2L)}\\
 	\vdots & \vdots & \ddots & \vdots\\
 	\bm{C}^{(L1)} & \bm{C}^{(L2)} & \dots & \bm{A}^{(L)}
 	\end{bmatrix}\in\R_{\geq 0}^{nL \times nL}\\
 	& =: \bm{A}_{\mathrm{intra}} + \omega \bm{A}_{\mathrm{inter}} := \mathrm{blkdiag}\left[ \bm{A}^{(1)}, \dots, \bm{A}^{(L)} \right] + \omega \widetilde{\bm{A}}\otimes \bm{I},
 	\end{split}
 	\end{align}
 	where the inter-layer coupling parameter $\omega\geq 0$ allows to trade off the relative weighting of intra- and inter-layer edges that is generally symmetric and indefinite.
 	
 	\begin{sloppypar}
 	For each layer $\ell=1,\dots,L$, we define the intra-layer degree vector \mbox{$\bm{d}^{(\ell)} = \bm{A}^{(\ell)}\bm{1}_n\in\R^n$}, where $\bm{1}_n\in\R^n$ denotes the vector of all ones.
 	With this, we define the individual layer graph Laplacians $\bm{L}^{(\ell)} = \mathrm{diag}(\bm{d}^{(\ell)}) - \bm{A}^{(\ell)}$ for $\ell=1,\dots,L$.
 	The degree vector of the supra-adjacency matrix reads
 	\begin{equation*}
 	\bm{d} = \bm{A1}_{nL} = \begin{bmatrix}(\bm{d}^{(1)})^T & \dots & (\bm{d}^{(L)})^T\end{bmatrix}^T + \omega\widetilde{\bm{A}}\bm{1}_L\otimes \bm{1}_n\in\R^{nL}.
 	\end{equation*}
 	Finally, the supra-Laplacian is defined as the graph Laplacian corresponding to the supra-adjacency matrix \eqref{eq:supra-adjacency} that reads
 	\begin{align}\label{eq:supra-laplacian}
 	\begin{split}
 	\bm{L} & = \mathrm{diag}(\bm{d}) - \bm{A} = \bm{L}_{\mathrm{intra}} + \omega\bm{L}_{\mathrm{inter}}\in\R_{\geq 0}^{nL \times nL}\\
 	& = \mathrm{blkdiag}\left[ \bm{L}^{(1)}, \dots, \bm{L}^{(L)} \right] + \omega \left( \mathrm{diag}(\widetilde{\bm{A}}\bm{1}_L) - \widetilde{\bm{A}} \right) \otimes \bm{I}.
 	\end{split}
 	\end{align}
 	\end{sloppypar}
 \end{definition}

\section{Community detection by modularity maximization}\label{sec:modularity}

 One of the most-studied problems in the structural analysis of networks and graph-based learning is the unsupervised detection of communities in networks.
 A community generally refers to a subgraph that is more strongly connected internally than to the overall network.
 The task of community detection algorithms is thus to find a partition of the network's entities that is suitable with respect to some objective function or quality measure.
 
 In general, the number of communities in a network is unknown or subject to the desired resolution, cf.~\Cref{rem:resolution_parameter}, and its determination is part of the community detection problem.
 We first fix notation to denote partitions of the node-layer pairs of a multiplex network into communities.
 \begin{definition}\label{def:sets_community_affiliation}
 	Let $n_c$ denote the number of distinct communities in a non-overlapping partition of the set of node-layer pairs of a multiplex network into subsets $S_r\subset \mathcal{V}$ with $r=1,\dots,n_c$, $S_r\cap S_t = \emptyset$ for $r\neq t$, and $S_1 \cup\dots\cup S_{n_c} = \mathcal{V}$.
 	Furthermore, let $S^{(\ell)}_r = S_r \cap \mathcal{V}^{(\ell)}, r=1,\dots,n_c$ denote the set of node-layer pairs of $S_r$ present in layer $\ell=1,\dots,L$.
 	 	
 	We define the characteristic community affiliation function
 	\begin{equation*}
 	U: \{1,\dots,n\} \times \{1,\dots,L\} \rightarrow \{1,\dots,n_c\}
 	\end{equation*}
 	that maps each node-layer pair $(j,\ell)$ to its community number $1,\dots,n_c$.
 	The community affiliation of all node-layer pairs can be expressed by the binary matrix
 	\begin{equation*}
 	 \bm{U} = \begin{bmatrix}
 	 \bm{U}^{(1)}\\
 	 \bm{U}^{(2)}\\
 	 \vdots\\
 	 \bm{U}^{(L)}
 	 \end{bmatrix}\in\{0,1\}^{nL \times n_c}, \qquad \bm{U}^{(\ell)}\in\{0,1\}^{n \times n_c},
 	\end{equation*}
 	in which the one-hot encoded row $(\ell-1)n+j$ corresponding to node-layer pair $(j,\ell)$ equals the standard basis vector $\bm{e}_c^T\in\R^{n_c}$ when $(j,\ell)$ belongs to community $c\in\{1,\dots,n_c\}$.
 	The ordering of node-layer pairs in $\bm{U}$ corresponds to that in the supra-adjacency matrix \eqref{eq:supra-adjacency} such that $\bm{U}^{(\ell)}$ represents the community affiliation of all node-layer pairs in layer $\ell$.
 \end{definition}

 There is a multitude of objective functions in the literature that quantify the quality of community partitions in different ways.
 One very popular such measure is modularity.
 It was originally introduced by Newman and Girvan \cite{newman2004finding} to identify the ``best'' partition from a hierarchy of partitions on different resolution levels.
 It compares the edge weights occurring in the adjacency matrix of a network to those that would be expected under a prescribed null model.
 Large modularity values are attained for partitions that assign nodes to the same community that mainly have stronger edge weights between them than expected at random while mainly having weaker-than-expected edge weights across communities.
 
 The modularity of multiplex networks consists of two separate contributions of intra- and inter-layer edges.
 The idea of comparing existing edge weights to a null model is applied to the intra-layer edges of each layer.
 In addition, positive contributions of inter-layer edges within communities promote the assignment of instances of the same physical node in different layers to the same community.
 We state modularity for multiplex networks as introduced by Mucha et.\ al.\ \cite{mucha2010community} and give the original single-layer version in \Cref{rem:single-layer_modularity}.
 \begin{definition}\label{def:multiplex_modularity}
	The modularity of a multiplex network is defined as
	\begin{equation}\label{eq:multiplex_modularity}
	Q(U) = \frac{1}{2\mu} \sum_{i,j=1}^n \sum_{k,\ell=1}^L \left[ \left( \bm{A}_{ij}^{(\ell)} - \frac{\gamma^{(\ell)}}{2m^{(\ell)}} \bm{d}_i^{(\ell)}\bm{d}_j^{(\ell)} \right) \delta_{k\ell} + \bm{C}_{j}^{(k\ell)}\delta_{ij} \right] \delta(U(i,k),U(j,\ell)),
	\end{equation}
	where $\delta$ denotes the Kronecker delta, $2m^{(\ell)} = \bm{1}_n^T\bm{d}^{(\ell)} = \bm{1}_n^T\bm{A}^{(\ell)}\bm{1}_n$ the intra-layer strength of layer $\ell=1,\dots,L$, $2\mu = \bm{1}_{nL}^T \bm{A}\bm{1}_{nL}$ the total strength of the multiplex network, and $\gamma^{(\ell)}\in\R_{>0}$ the resolution parameters, cf.~\Cref{rem:resolution_parameter}.
 \end{definition}
 
 For later use in this manuscript it is beneficial to compactly express the multiplex modularity $Q$ in the following linear algebra notation.
 The claim can be verified by a straightforward computation.

 \begin{proposition}
 	\begin{sloppypar}
	For matrix-valued community affiliation notation, i.e., \mbox{$\bm{U}\in\{0,1\}^{nL \times n_c}$} with \mbox{$\bm{U1}_{n_c}=\bm{1}_{nL}$}, cf.~\Cref{def:sets_community_affiliation}, we can write \eqref{eq:multiplex_modularity} as
	\end{sloppypar}
	\begin{equation}\label{eq:multiplex_modularity_la}
	Q(\bm{U}) = \frac{1}{2\mu} \mathrm{tr}\left(\bm{U}^T\bm{M}\bm{U}\right),
	\end{equation}
	where the block matrix
	\begin{equation*}
	\bm{M} = \begin{bmatrix}
	\bm{M}^{(1)} & \bm{C}^{(12)} & \dots & \bm{C}^{(1L)}\\
	\bm{C}^{(21)} & \bm{M}^{(2)} & \dots & \bm{C}^{(2L)}\\
	\vdots & \vdots & \ddots & \vdots\\
	\bm{C}^{(L1)} & \bm{C}^{(L2)} & \dots & \bm{M}^{(L)}
	\end{bmatrix}\in\R^{nL\times nL}, \qquad \bm{M}^{(\ell)} = \bm{A}^{(\ell)} -  \frac{\gamma^{(\ell)}}{2m^{(\ell)}}\bm{d}^{(\ell)}(\bm{d}^{(\ell)})^T,
	\end{equation*}
	defines the multiplex modularity matrix and $\bm{M}^{(\ell)}\in\R^{n\times n}$ the single-layer modularity matrices of layers $\ell=1,\dots,L$, cf.~\Cref{rem:single-layer_modularity}, and $\mathrm{tr}$ denotes the trace of a matrix.
 \end{proposition}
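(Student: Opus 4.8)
The plan is to pass from the scalar, index-heavy definition \eqref{eq:multiplex_modularity} to the compact trace form \eqref{eq:multiplex_modularity_la} by expressing the Kronecker delta through the one-hot rows of $\bm{U}$ and then recognising the block structure of $\bm{M}$. The computation is elementary; the only work is careful bookkeeping of the block/supra-index correspondence.

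First I would use that each row of $\bm{U}^{(\ell)}$ is a standard basis vector: for node-layer pairs $(i,k)$ and $(j,\ell)$ one has $\delta(U(i,k),U(j,\ell)) = \sum_{c=1}^{n_c} \bm{U}^{(k)}_{ic}\bm{U}^{(\ell)}_{jc}$, since $\bm{U}^{(k)}_{ic}\bm{U}^{(\ell)}_{jc}=1$ exactly when both $(i,k)$ and $(j,\ell)$ lie in community $c$, and at most one index $c$ can satisfy this. Substituting into \eqref{eq:multiplex_modularity} and interchanging the finite sums would give
\begin{equation*}
Q(U) = \frac{1}{2\mu}\sum_{c=1}^{n_c}\sum_{k,\ell=1}^{L}\sum_{i,j=1}^{n} \bm{U}^{(k)}_{ic}\left[ \left( \bm{A}_{ij}^{(\ell)} - \tfrac{\gamma^{(\ell)}}{2m^{(\ell)}} \bm{d}_i^{(\ell)}\bm{d}_j^{(\ell)} \right)\delta_{k\ell} + \bm{C}_j^{(k\ell)}\delta_{ij} \right]\bm{U}^{(\ell)}_{jc}.
\end{equation*}

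Next I would identify the bracketed quantity with an entry of $\bm{M}$. Writing the supra-indices $p=(k-1)n+i$ and $q=(\ell-1)n+j$, the entry $\bm{M}_{pq}$ equals $\bm{M}^{(\ell)}_{ij} = \bm{A}^{(\ell)}_{ij} - \tfrac{\gamma^{(\ell)}}{2m^{(\ell)}}\bm{d}^{(\ell)}_i\bm{d}^{(\ell)}_j$ on the diagonal blocks $k=\ell$ and $\bm{C}^{(k\ell)}_{ij}$ on the off-diagonal blocks $k\neq\ell$; since $\bm{C}^{(k\ell)}$ is diagonal, $\bm{C}^{(k\ell)}_{ij} = \bm{C}^{(k\ell)}_j\delta_{ij}$, which (with the convention $\bm{C}^{(\ell\ell)}=\bm{0}$) is precisely the $\delta_{k\ell}$/$\delta_{ij}$ split appearing in the bracket. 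Hence the bracket is exactly $\bm{M}_{pq}$, and with $\bm{U}_{pc}=\bm{U}^{(k)}_{ic}$, $\bm{U}_{qc}=\bm{U}^{(\ell)}_{jc}$ the four-fold sum over $k,\ell,i,j$ collapses to a sum over the $nL$ supra-indices, yielding
\begin{equation*}
Q(U) = \frac{1}{2\mu}\sum_{c=1}^{n_c}\sum_{p,q=1}^{nL} \bm{U}_{pc}\,\bm{M}_{pq}\,\bm{U}_{qc} = \frac{1}{2\mu}\sum_{c=1}^{n_c}\bigl(\bm{U}^T\bm{M}\bm{U}\bigr)_{cc} = \frac{1}{2\mu}\,\mathrm{tr}\bigl(\bm{U}^T\bm{M}\bm{U}\bigr),
\end{equation*}
which is \eqref{eq:multiplex_modularity_la}.

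The main obstacle, such as it is, is purely notational: keeping the block-to-supra-index correspondence $p\leftrightarrow(k,i)$ consistent with the node-layer ordering fixed in \Cref{def:sets_community_affiliation} and \eqref{eq:supra-adjacency}, and verifying that the factors $\delta_{k\ell}$ and $\delta_{ij}$ in \eqref{eq:multiplex_modularity} are exactly what the diagonal-block structure of $\bm{M}$ and the diagonality of the inter-layer coupling blocks $\bm{C}^{(k\ell)}$ produce. Once these conventions are pinned down, no estimates or nontrivial identities are needed, which is why the claim is asserted to follow by a straightforward computation.
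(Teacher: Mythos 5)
Your proof is correct and is precisely the ``straightforward computation'' the paper alludes to: the paper gives no explicit proof of this proposition, and your argument --- expanding $\delta(U(i,k),U(j,\ell))=\sum_{c}\bm{U}^{(k)}_{ic}\bm{U}^{(\ell)}_{jc}$, matching the bracket to $\bm{M}_{pq}$ via the supra-index correspondence (with $\bm{C}^{(\ell\ell)}=\bm{0}$), and collapsing to the trace --- is the intended verification. No gaps.
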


 We make a couple of remarks on the definition of multiplex modularity.
 
 \begin{remark}\label{rem:single-layer_modularity}
	For single-layer networks, i.e., $L=1$, and all layer indices dropped, multiplex modularity \eqref{eq:multiplex_modularity} simplifies to
	\begin{equation}\label{eq:single-layer_modularity}
	Q(\bm{U}) = \frac{1}{2m} \mathrm{tr}\left(\bm{U}^T \bm{M}\bm{U}\right) = \frac{1}{2m} \sum_{i,j=1}^n \left( \bm{A}_{ij} - \gamma \frac{\bm{d}_i\bm{d}_j}{2m} \right) \delta(U(i), U(j)),
	\end{equation}
	for $2m=2\mu=\bm{1}_n^T\bm{A1}_n$ and the single-layer modularity matrix $\bm{M} = \bm{A} - \frac{\gamma}{2m}\bm{dd}^T$.
 \end{remark}

 \begin{remark}\label{rem:resolution_parameter}
 	The original version of modularity for single-layer networks \cite{newman2004finding} did not contain the scalar resolution parameter $\gamma>0$ that was later introduced by Reichardt and Bornholdt \cite{reichardt2006statistical}.
 	It has, however, become a standard ingredient since it has been shown that community detection methods based on modularity maximization suffer from the so-called resolution limit \cite{fortunato2007resolution}.
 	It describes the failure of this class of methods to resolve communities below a certain size that depends on the network size and the degree of interconnectedness of the communities.
 	The choice $\gamma>1$ leads to a larger number of smaller communities compared to the case $\gamma=1$ since it assigns more weight to the negative contribution of the null model while $\gamma<1$ yields fewer large communities.
 \end{remark}
 \begin{remark}
	The terms $\frac{\bm{d}_i^{(\ell)}\bm{d}_j^{(\ell)}}{2m^{(\ell)}}$ in \eqref{eq:multiplex_modularity} and $\frac{\bm{d}_i\bm{d}_j}{2m}$ in \eqref{eq:single-layer_modularity} represent the Chung--Lu random graph model \cite{chung2002average,chung2002connected} in which the expected edge weight between two nodes is given by the normalized product of their degrees.
	It is the most frequently used null model in modularity maximization although other choices have been proposed in the literature \cite{arenas2008analysis,ronhovde2010local,traag2011narrow,fasino2016generalized}.
 \end{remark}
 \begin{remark}
 	Throughout this manuscript, we consider all-to-all layer coupling represented by the matrix $\widetilde{\bm{A}} = \bm{11}^T - \bm{I}$, cf.~\Cref{def:multiplex_network_la}.
 	Different choices may be preferable in other situations, e.g., if the presence or absence of inter-layer edges reflects the possibility of changing between modes of transportation at a given node when evaluating walk-based network measures for transport systems, cf.~\cite{bergermann2021orientations,bergermann2024core}.
 	In modularity maximization for multiplex networks, however, it appears sensible to connect all representations of a physical node since it promotes their assignment to the same community.
 	As an extreme example, consider a node-layer pair $(j,\ell)$ that is isolated within layer $\ell$ such that $\bm{A}_{ij}^{(\ell)} - \frac{\gamma^{(\ell)}}{2m^{(\ell)}} \bm{d}_i^{(\ell)}\bm{d}_j^{(\ell)}=0$ for all $i=1,\dots,n$.
 	The only way for it to contribute to multiplex modularity \eqref{def:multiplex_modularity} is via non-zero inter-layer edge weights $\bm{C}_{j}^{(k\ell)}$ to node-layer pairs $(j,k)$ in other layers $k=1,\dots,L,$ where $(j,k)$ may be part of a strongly connected community.
 \end{remark}
 
 Although originally introduced as a scalar measure for the quality of a given partition \cite{newman2004finding}, the maximization of (multiplex) modularity has emerged as the objective underlying a wide class of community detection methods, cf., e.g., \cite{newman2006modularity,brandes2007modularity,blondel2008fast,mucha2010community,hu2013method,zhang2018sparse,boyd2018simplified,traag2019louvain}.
 In particular, the Louvain method \cite{blondel2008fast} and its generalization to multiplex networks (GenLouvain) \cite{mucha2010community} enjoy huge popularity.
 The Louvain method represents a greedy approach that initializes every node with its own community, considers each node in a randomized order, and merges it into the community of the neighboring node that improves modularity the most.
 In the case of multiplex networks, the procedure is performed on the node-layer pair level.
 Each detected community is condensed into a super-node when no more merging with positive gain in modularity is possible and the process is repeated recursively.
 
 Performing each step in the Louvain method locally optimal with respect to increasing modularity makes it challenging for competing methods to produce partitions that obtain higher modularity scores.
 However, alternative metrics such as classification accuracy or normalized mutual information (NMI) score exist in situations in which a ground truth community structure is available (at least at one point of a hierarchy of partitions and with respect to selected node features such as department affiliation in a social network).
 Since it has been observed that the modularity landscape of a given network often possesses multiple local maxima corresponding to quite different partitions with different accuracies and NMIs \cite{good2010performance,boyd2018simplified} it may be worthwhile to also consider the latter two metrics.

\section{Multiplex balanced total variation minimization}\label{sec:MPBTV}

 In this and the next section, we propose two methods for unsupervised community detection in multiplex networks that are based on maximizing multiplex modularity, cf.~\Cref{def:multiplex_modularity}, by means of gradient flows of suitable energy functionals.
 As discussed in \Cref{sec:intro}, the aim of both methods is to assign a community label to each node-layer pair of a multiplex network for a fixed number of communities $n_c\in\N$.
 The number of communities is thus a hyper-parameter in both methods that does not need to be specified \emph{a priori} in other community detection methods.
 As described in more detail in \Cref{sec:numerical_solution_of_ODEs}, our methods overcome this limitation by low runtimes that allow the detection of the number $n_c$ of communities via grid search when $n_c$ is relatively small.
 For larger numbers of communities, a recursive splitting approach as discussed for the single-layer BTV method \cite{boyd2018simplified} could be applicable.
 
 The first method proposed in this manuscript is built on the single-layer methods introduced in \cite{hu2013method,boyd2018simplified}.
 They leverage an elegant equivalence between the maximization of single-layer modularity, cf.~\Cref{rem:single-layer_modularity}, a certain graph cut problem, and a total variation minimization formulation.
 The latter is realized by numerically minimizing a Ginzburg--Landau-type energy functional.
 In classical continuum models, this process models phase separation phenomena with diffuse interfaces whose width is proportional to an interface parameter $\epsilon>0$.
 The functional's $\Gamma$-convergence to a total variation functional considers the sharp interface limit $\epsilon\rightarrow 0$, which promotes the separation of nodes into clearly separated communities in the discrete graph setting, cf.~\cite{van2012gamma} and references therein.
 Note that a similar equivalence of modularity maximization and total variation minimization for single-layer networks has been shown in \cite{cristofari2020total}, where the authors propose a non-linear active-set optimization approach.
 
 Our first method generalizes the single-layer Balanced TV approach proposed by Boyd et.\ al.\ \cite{boyd2018simplified} to multiplex networks and we call it multiplex balanced total variation (\texttt{MPBTV}).
 Before we can state similar equivalences between multiplex modularity maximization, multiplex graph cuts, and the minimization of total variation functionals, we require the following definitions.
 \begin{definition}
 	We define the volume of subsets $S_r$ and $S_r^{(\ell)}$ of node-layer pairs defined in \Cref{def:sets_community_affiliation} via
 	\begin{equation*}
 	 \mathrm{vol}(S_r) = \sum_{(j,\ell)\in S_r} \bm{d}_{(\ell-1)n+j}, \qquad \mathrm{vol}(S^{(\ell)}_r) = \sum_{j\in S^{(\ell)}_r} \bm{d}^{(\ell)}_{j}.
 	\end{equation*}
 	Note that the index $(\ell-1)n+j$ corresponds to node-layer pair $(j,\ell)$ in the framework introduced in \Cref{def:multiplex_network_la}.
 	Moreover, we denote the complement of the set $S_r$ by $S_r^C = \mathcal{V}\setminus S_r$, which allows us to define the graph cut of a multiplex network as the sum of edge weights between node-layer pairs across the subsets $S_r$ and $S_r^C$, i.e.,
 	\begin{equation*}
 	 \mathrm{Cut}_{\mathrm{MP}}(S_r,S_r^C) = \sum_{(i,k)\in S_r, (j,\ell)\in S_r^C} \bm{A}_{(k-1)n+i,(\ell-1)n+j}.
 	\end{equation*}
 	Finally, we define the multiplex total variation of a partition of node-layer pairs into $S_1,\dots,S_{n_c}$ as the usual network total variation of the supra-adjacency matrix $\bm{A}\in\R^{nL \times nL}$ without differentiating between intra- and inter-layer edges as
 	\begin{align*}
 	|\bm{U}|_{\mathrm{MPTV}} & = \frac{1}{2} \sum_{i,j=1}^n \sum_{k,\ell=1}^L \bm{A}_{(k-1)n+i,(\ell-1)n+j} | U(i,k) - U(j,\ell) |\\
 	& = \sum_{r=1}^{n_c} \sum_{(i,k)\in S_r, (j,\ell)\in S_r^C} \bm{A}_{(k-1)n+i,(\ell-1)n+j} = \sum_{r=1}^{n_c} \mathrm{Cut}_{\mathrm{MP}}(S_r,S_r^C).
 	\end{align*}
 \end{definition}
 
 The above generalizations of community affiliation, graph cuts, and total variation for multiplex networks allow us to state the following result in analogy to the single-layer results \cite[Theorem 2.1]{hu2013method} and \cite[Proposition 3.4]{boyd2018simplified}.
 
 \begin{proposition}\label{prop:equivalence}
	The following optimization problems are equivalent.
	\begin{enumerate}
		\item Maximizing multiplex modularity \eqref{eq:multiplex_modularity_la}
		\begin{equation}\label{eq:multiplex_mod_max}
		 \max_{\substack{\bm{U}\in\{0,1\}^{nL \times n_c}\\\bm{U1}=\bm{1}}} Q(\bm{U})
		\end{equation}
		\item Minimizing the balanced multiplex graph cut
		\begin{equation*}
		 \min_{\substack{\bm{U}\in\{0,1\}^{nL \times n_c}\\\bm{U1}=\bm{1}}} \sum_{r=1}^{n_c}\mathrm{Cut}_{\mathrm{MP}}(S_r,S_r^C) + \sum_{r=1}^{n_c}\sum_{\ell=1}^L \frac{\gamma^{(\ell)}}{2m^{(\ell)}} \sum_{i\in S_r^{(\ell)}} \bm{d}_{(\ell-1)n+i}\mathrm{vol}(S_r^{(\ell)})
		\end{equation*}
		\item Minimizing the balanced multiplex total variation functional
		\begin{equation}\label{eq:multiplex_BTV}
		 \min_{\substack{\bm{U}\in\{0,1\}^{nL \times n_c}\\\bm{U1}=\bm{1}}} |\bm{U}|_{\mathrm{MPTV}} + \sum_{\ell=1}^L \frac{\gamma^{(\ell)}}{2m^{(\ell)}} \| (\bm{d}^{(\ell)})^T \bm{U}^{(\ell)} \|_2^2
		\end{equation}
	\end{enumerate}
 \end{proposition}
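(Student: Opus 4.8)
The plan is to prove the chain 1 $\Leftrightarrow$ 2 $\Leftrightarrow$ 3 by showing that all three problems carry, up to an overall sign and an additive constant that is independent of $\bm{U}$, the same objective on the common feasible set $\{\bm{U}\in\{0,1\}^{nL\times n_c}:\bm{U}\bm{1}=\bm{1}\}$. Concretely, I would show that $-2\mu\,Q(\bm{U})$ equals the objective of 2 shifted by the constant $-2\mu$, and that the objectives of 2 and 3 are literally identical; equivalence of the sets of maximizers/minimizers then follows at once.

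For 1 $\Leftrightarrow$ 2 I would start from the entrywise form \eqref{eq:multiplex_modularity}, multiply by $2\mu>0$, and split the bracketed term. The key observation is that the intra-layer part $\bm{A}_{ij}^{(\ell)}\delta_{k\ell}$ together with the inter-layer part $\bm{C}_j^{(k\ell)}\delta_{ij}$ reassembles exactly into the supra-adjacency entry $\bm{A}_{(k-1)n+i,(\ell-1)n+j}$ of \eqref{eq:supra-adjacency}: the diagonal matrices $\bm{C}^{(k\ell)}$ contribute only for $i=j$ and precisely fill the off-diagonal blocks, while $\delta_{k\ell}$ selects the diagonal blocks. Writing $\delta(U(p),U(q))$ as the membership indicator $\sum_r \bm{1}[p\in S_r]\,\bm{1}[q\in S_r]$, and using $\sum_{q\in\mathcal{V}}\bm{A}_{pq}=\sum_{q\in S_r}\bm{A}_{pq}+\sum_{q\in S_r^C}\bm{A}_{pq}$ for $p\in S_r$ together with $\bm{1}_{nL}^T\bm{A}\bm{1}_{nL}=2\mu$, the supra-adjacency contribution collapses to $2\mu-\sum_{r=1}^{n_c}\mathrm{Cut}_{\mathrm{MP}}(S_r,S_r^C)$. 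The null-model contribution carries a single Kronecker factor $\delta_{k\ell}$, so its $k$-sum collapses and it becomes $\sum_{\ell=1}^L\frac{\gamma^{(\ell)}}{2m^{(\ell)}}\sum_{r=1}^{n_c}\big(\sum_{i\in S_r^{(\ell)}}\bm{d}_i^{(\ell)}\big)^2=\sum_\ell\frac{\gamma^{(\ell)}}{2m^{(\ell)}}\sum_r\mathrm{vol}(S_r^{(\ell)})^2$, which I would rewrite as $\sum_\ell\frac{\gamma^{(\ell)}}{2m^{(\ell)}}\sum_r\sum_{i\in S_r^{(\ell)}}\bm{d}_i^{(\ell)}\,\mathrm{vol}(S_r^{(\ell)})$, i.e. the balance term of 2. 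Hence $-2\mu\,Q(\bm{U})$ equals the objective of 2 up to the constant $-2\mu$, giving 1 $\Leftrightarrow$ 2.

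For 2 $\Leftrightarrow$ 3 I would invoke the definition of $|\bm{U}|_{\mathrm{MPTV}}$, which already records $|\bm{U}|_{\mathrm{MPTV}}=\sum_{r=1}^{n_c}\mathrm{Cut}_{\mathrm{MP}}(S_r,S_r^C)$, so the cut terms coincide verbatim. For the balance terms I would compute the $r$-th entry of the row vector $(\bm{d}^{(\ell)})^T\bm{U}^{(\ell)}$, which is $\sum_{j:\,U(j,\ell)=r}\bm{d}_j^{(\ell)}=\mathrm{vol}(S_r^{(\ell)})$; hence $\|(\bm{d}^{(\ell)})^T\bm{U}^{(\ell)}\|_2^2=\sum_{r=1}^{n_c}\mathrm{vol}(S_r^{(\ell)})^2$, matching $\sum_r\sum_{i\in S_r^{(\ell)}}\bm{d}_i^{(\ell)}\,\mathrm{vol}(S_r^{(\ell)})$ term by term. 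Thus 2 and 3 have the same objective, closing the loop.

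I expect the only point needing genuine care to be the bookkeeping around the factor $\tfrac12$ in $|\bm{U}|_{\mathrm{MPTV}}$: summing over ordered pairs $\big((i,k),(j,\ell)\big)$ double-counts each cut edge, and the symmetric term $\bm{A}_{(k-1)n+i,(\ell-1)n+j}\,|U(i,k)-U(j,\ell)|$ must be read with $|U(i,k)-U(j,\ell)|$ as the indicator that the two community labels differ (equivalently one half of the $\ell_1$ distance between the one-hot rows of $\bm{U}$), so that $\tfrac12\sum_{\text{ordered pairs}}$ does recover $\sum_r\mathrm{Cut}_{\mathrm{MP}}(S_r,S_r^C)$; and, likewise, keeping track that the null-model sum carries only one Kronecker $\delta_{k\ell}$ so that it genuinely decouples over layers, whereas the cut term mixes intra- and inter-layer edges through the full supra-adjacency $\bm{A}$. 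None of these steps is deep — the argument is the multiplex bookkeeping analogue of the single-layer computations in \cite[Theorem 2.1]{hu2013method} and \cite[Proposition 3.4]{boyd2018simplified}.
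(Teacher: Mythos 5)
Your proposal is correct and follows essentially the same route as the paper's proof: expand the entrywise modularity, reassemble the intra- and inter-layer terms into supra-adjacency entries, split the within-community sums as total strength minus cut using $2\mu=\bm{1}^T\bm{A}\bm{1}$, and identify the layer-wise null-model term with $\sum_r\mathrm{vol}(S_r^{(\ell)})^2=\|(\bm{d}^{(\ell)})^T\bm{U}^{(\ell)}\|_2^2$, so that $Q$ differs from the objectives of 2 and 3 only by the positive factor $\tfrac{1}{2\mu}$, a sign, and an additive constant. Your remark that $|U(i,k)-U(j,\ell)|$ in the definition of $|\bm{U}|_{\mathrm{MPTV}}$ must be read as the indicator that the labels differ is a fair observation about the paper's notation, but it does not change the argument.
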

 \begin{proof}
 	We follow the proofs of \cite[Theorem 2.1]{hu2013method} and \cite[Proposition 3.4]{boyd2018simplified} in the more general setting of multiplex networks.
 	Starting from \eqref{eq:multiplex_mod_max}, we have
 	\begin{align}
 	Q(\bm{U}) & = \frac{1}{2\mu} \sum_{i,j=1}^n \sum_{k,\ell=1}^L \left[ \left( \bm{A}_{ij}^{(\ell)} - \frac{\gamma^{(\ell)}}{2m^{(\ell)}} \bm{d}_i^{(\ell)}\bm{d}_j^{(\ell)} \right) \delta_{k\ell} + \bm{C}_{j}^{(k\ell)}\delta_{ij} \right] \delta(U(i,k),U(j,\ell))\label{eq:equiv_proof_mod}\\
 	& = \frac{1}{2\mu} \sum_{r=1}^{n_c} \sum_{(i,k)\in S_r,(j,\ell)\in S_r} \left[ \left( \bm{A}_{ij}^{(\ell)} - \gamma^{(\ell)} \frac{\bm{d}_i^{(\ell)}\bm{d}_j^{(\ell)}}{2m^{(\ell)}} \right) \delta_{k\ell} + \bm{C}_{j}^{(k\ell)}\delta_{ij} \right]\nonumber\\
 	& = \frac{1}{2\mu} \sum_{r=1}^{n_c} \sum_{(i,k)\in S_r,(j,\ell)\in S_r} \biggl[ \Bigl( \bm{A}_{(k-1)n+i,(\ell-1)n+j} \delta_{k\ell}\Bigr.\biggr.\nonumber\\
 	& \biggl.\Bigl.\qquad\qquad\qquad\qquad +~\bm{A}_{(k-1)n+j,(\ell-1)n+j}\delta_{ij} \Bigr) - \frac{\gamma^{(\ell)}}{2m^{(\ell)}} \bm{d}_{(\ell-1)n+i}\bm{d}_{(\ell-1)n+j} \delta_{k\ell} \biggr],\nonumber
 	\end{align}
 	where we used that $\bm{A}_{ij}^{(\ell)}\delta_{k\ell} + \bm{C}_j^{(k\ell)}\delta_{ij}$ matches the sparsity structure of the supra-adjacency matrix $\bm{A}$.
 	Next, we rewrite the sum of edge weights between node-layer pairs from the same community as the difference between the sum of edge weights to all node-layer pairs of the network and the sum of edge weights across communities and obtain
 	\begin{align}
	Q(\bm{U}) = & \frac{1}{2\mu} \sum_{r=1}^{n_c} \left[ \sum_{(i,k)\in S_r,(j,\ell)\in \mathcal{V}} \bm{A}_{(k-1)n+i,(\ell-1)n+j} - \sum_{(i,k)\in S_r,(j,\ell)\in S_r^C} \right. \bm{A}_{(k-1)n+i,(\ell-1)n+j}\nonumber\\
 	& \qquad\qquad \left. - \sum_{\ell=1}^L \frac{\gamma^{(\ell)}}{2m^{(\ell)}} \sum_{i,j\in S_r^{(\ell)}} \bm{d}_{(\ell-1)n+i}\bm{d}_{(\ell-1)n+j} \right].\nonumber
 	\end{align}
 	The definition $2\mu = \bm{1}_{nL}^T \bm{A}\bm{1}_{nL}$ applied to the first summand yields
 	\begin{align}
 	& 1 - \frac{1}{2\mu}\sum_{r=1}^{n_c}\sum_{(i,k)\in S_r,(j,\ell)\in S_r^C} \bm{A}_{(k-1)n+i,(\ell-1)n+j}\nonumber\\
 	& \qquad - \frac{1}{2\mu}\sum_{r=1}^{n_c}\sum_{\ell=1}^L \frac{\gamma^{(\ell)}}{2m^{(\ell)}} \sum_{i,j\in S_r^{(\ell)}} \bm{d}_{(\ell-1)n+i}\bm{d}_{(\ell-1)n+j}\nonumber\\
 	=~& 1 - \frac{1}{2\mu}\sum_{r=1}^{n_c}\mathrm{Cut}_{\mathrm{MP}}(S_r,S_r^C) - \frac{1}{2\mu}\sum_{r=1}^{n_c}\sum_{\ell=1}^L \frac{\gamma^{(\ell)}}{2m^{(\ell)}} \sum_{i\in S_r^{(\ell)}} \bm{d}_{(\ell-1)n+i}\mathrm{vol}(S_r^{(\ell)})\label{eq:equiv_proof_cut}\\
 	=~& 1 - \frac{1}{2\mu}|\bm{U}|_{\mathrm{MPTV}} - \frac{1}{2\mu}\sum_{\ell=1}^L \frac{\gamma^{(\ell)}}{2m^{(\ell)}}\sum_{r=1}^c\mathrm{vol}(S_r^{(\ell)})^2\nonumber\\
 	=~& 1 - \frac{1}{2\mu}|\bm{U}|_{\mathrm{MPTV}} - \frac{1}{2\mu}\sum_{\ell=1}^L \frac{\gamma^{(\ell)}}{2m^{(\ell)}}\|(\bm{d}^{(\ell)})^T\bm{U}^{(\ell)}\|_2^2.\label{eq:equiv_proof_btv}
 	\end{align}
 	The assertion follows from \eqref{eq:equiv_proof_mod}, \eqref{eq:equiv_proof_cut}, and \eqref{eq:equiv_proof_btv} since $\frac{1}{2\mu}>0$.
 \end{proof}

 In particular, \Cref{prop:equivalence} shows that the balancing term in the multiplex graph cut and total variation formulation is applied layer-wise.
 Hence, the typically undesired formation of very small communities is counteracted not only on the full multiplex level but on each individual network layer.
 
 As it stands, however, the binary nature of the community affiliation matrix $\bm{U}\in\{0,1\}^{nL\times n_c}$ makes \Cref{prop:equivalence} of little practical use since the equivalent formulations are still of a combinatorial nature.
 Popular approaches to circumvent this issue are resorting to heuristic optimization procedures \cite{blondel2008fast,mucha2010community,traag2019louvain} or relaxed formulations \cite{hu2013method,boyd2018simplified,tudisco2018community,zhang2018sparse,cristofari2020total} of the problem.
 
 As in the single-layer approaches \cite{hu2013method,boyd2018simplified}, we choose to relax $\bm{U}\in\{0,1\}^{nL \times n_c}$ with \mbox{$\bm{U1}_{n_c}=\bm{1}_{nL}$} to a real-valued matrix $\bm{U}\in\R^{nL\times n_c}$ with each of its rows restricted to the Gibbs simplex $\Sigma_{n_c}=\left\{ [\bm{u}_1, \dots, \bm{u}_{n_c}]^T\in[0,1]^{n_c} : \sum_{r=1}^{n_c} \bm{u}_r = 1 \right\}$ and propose a technique to find approximate minimizers of the balanced multiplex total variation functional \eqref{eq:multiplex_BTV}.
 
  \begin{proposition}\label{prop:GL_functional_convergence}
  	The discrete multiplex Ginzburg--Landau functional
  	\begin{equation}\label{eq:GL_functional}
  	\mathcal{F}_{\epsilon,\mathrm{MP}}(\bm{U}) = \mathrm{tr}(\bm{U}^T \bm{L} \bm{U}) + \frac{1}{\epsilon} \sum_{i=1}^n\sum_{\ell=1}^L P(\bm{U}^{(\ell)}_i) + \sum_{\ell=1}^L \frac{\gamma^{(\ell)}}{2m^{(\ell)}} \| (\bm{d}^{(\ell)})^T\bm{U}^{(\ell)} \|_2^2
  	\end{equation}
  	$\Gamma$-converges, i.e., in the limit $\epsilon\rightarrow 0$, to
  	\begin{equation*}
  	|\bm{U}|_{\mathrm{MPTV}} + \sum_{\ell=1}^L \frac{\gamma^{(\ell)}}{2m^{(\ell)}} \| (\bm{d}^{(\ell)})^T \bm{U}^{(\ell)} \|_2^2,
  	\end{equation*}
  	if $\bm{U}$ corresponds to a partition.
  	
  	\begin{sloppypar}
  		Here, $P: \R^{n_c}\rightarrow \R_{\geq 0}$ denotes a multi-well potential with $\frac{P(\bm{U}_i^{(\ell)})}{\|\bm{U}_i^{(\ell)}\|}\rightarrow\infty$ for \mbox{$\|\bm{U}_i^{(\ell)}\|\rightarrow\infty$} and minima in the corners of the Gibbs simplex $\Sigma_{n_c}$, which drives $\bm{U}_i^{(\ell)}\in\R^{n_c}$ towards these minima.
  		The gradient flow of $\mathcal{F}_{\epsilon,\mathrm{MP}}(\bm{U})$ with respect to $\bm{U}$ and the standard Euclidean inner product reads
  	\end{sloppypar}
  	\begin{equation*}
  	\frac{\partial \bm{U}}{\partial t} = -\nabla_{\bm{U}} \mathcal{F}_{\epsilon,\mathrm{MP}}(\bm{U}) = - \left(\bm{L} + \bm{K}\right)\bm{U} - \frac{1}{\epsilon} P'(\bm{U}),
  	\end{equation*}
  	where $\bm{K} = \mathrm{blkdiag}\left[\frac{\gamma^{(1)}}{m^{(1)}}\bm{d}^{(1)}(\bm{d}^{(1)})^T, \dots , \frac{\gamma^{(L)}}{m^{(L)}}\bm{d}^{(L)}(\bm{d}^{(L)})^T\right]$ and $P'$ is applied row-wise.
  \end{proposition}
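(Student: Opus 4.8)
The plan is to separate out the part of $\mathcal{F}_{\epsilon,\mathrm{MP}}$ that carries the $\epsilon$-scaling. Write
\begin{equation*}
\mathcal{F}_{\epsilon,\mathrm{MP}}(\bm{U}) = \mathcal{G}_\epsilon(\bm{U}) + \mathcal{B}(\bm{U}), \quad \mathcal{G}_\epsilon(\bm{U}) = \mathrm{tr}(\bm{U}^T\bm{L}\bm{U}) + \tfrac1\epsilon\sum_{i=1}^n\sum_{\ell=1}^L P(\bm{U}_i^{(\ell)}), \quad \mathcal{B}(\bm{U}) = \sum_{\ell=1}^L\tfrac{\gamma^{(\ell)}}{2m^{(\ell)}}\|(\bm{d}^{(\ell)})^T\bm{U}^{(\ell)}\|_2^2 .
\end{equation*}
Here $\mathcal{G}_\epsilon$ is precisely the multi-class graph Ginzburg--Landau functional of the supra-graph $(\mathcal{V},\bm{A})$ with supra-Laplacian $\bm{L}$, while $\mathcal{B}$ is continuous and independent of $\epsilon$. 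Since $\Gamma$-convergence is stable under the addition of a fixed continuous functional, the first assertion reduces to the statement that $\mathcal{G}_\epsilon$ $\Gamma$-converges, as $\epsilon\to0$, to $|\bm{U}|_{\mathrm{MPTV}}$ on partition matrices (and to $+\infty$ off them); this is the exact multiplex analogue of the single-layer results underlying \cite{hu2013method,boyd2018simplified}, cf.~\cite{van2012gamma}. On a finite node set I would give the short self-contained argument below.

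The relaxed admissible set --- real matrices whose rows lie in the Gibbs simplex $\Sigma_{n_c}$ --- is compact, so equi-coercivity is automatic and it remains to check the two $\Gamma$-inequalities. For the $\liminf$ inequality, take any $\bm{U}_\epsilon\to\bm{U}$; dropping the nonnegative potential term and using continuity of $\bm{V}\mapsto\mathrm{tr}(\bm{V}^T\bm{L}\bm{V})$ gives $\liminf_\epsilon\mathcal{G}_\epsilon(\bm{U}_\epsilon)\ge\mathrm{tr}(\bm{U}^T\bm{L}\bm{U})$, and if $\bm{U}$ is not a partition then $\sum_{i,\ell}P(\bm{U}_{\epsilon,i}^{(\ell)})\to\sum_{i,\ell}P(\bm{U}_i^{(\ell)})>0$ --- because $P$ vanishes only at the vertices of $\Sigma_{n_c}$ --- so that $\tfrac1\epsilon\sum_{i,\ell}P(\bm{U}_{\epsilon,i}^{(\ell)})\to+\infty$ and hence $\liminf_\epsilon\mathcal{G}_\epsilon(\bm{U}_\epsilon)=+\infty$. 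For the recovery sequence at a partition matrix $\bm{U}$, I would take the constant sequence $\bm{U}_\epsilon\equiv\bm{U}$: every row $\bm{U}_i^{(\ell)}$ is a vertex of $\Sigma_{n_c}$, so $P(\bm{U}_i^{(\ell)})=0$ and $\mathcal{G}_\epsilon(\bm{U})=\mathrm{tr}(\bm{U}^T\bm{L}\bm{U})$ for every $\epsilon$. It then remains to identify this value with the multiplex total variation: applying the identity $\bm{x}^T\bm{L}\bm{x}=\tfrac12\sum_{p,q}\bm{A}_{pq}(x_p-x_q)^2$ column by column to the binary matrix $\bm{U}$ gives $\mathrm{tr}(\bm{U}^T\bm{L}\bm{U})=\tfrac12\sum_{p,q}\bm{A}_{pq}\sum_{r=1}^{n_c}(\bm{U}_{pr}-\bm{U}_{qr})^2=\sum_{p,q\,:\,U(p)\neq U(q)}\bm{A}_{pq}=\sum_{r=1}^{n_c}\mathrm{Cut}_{\mathrm{MP}}(S_r,S_r^C)=|\bm{U}|_{\mathrm{MPTV}}$, using that the rows of a partition $\bm{U}$ are standard basis vectors. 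Adding $\mathcal{B}$ back yields the claimed $\Gamma$-limit.

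For the gradient flow I would differentiate $\mathcal{F}_{\epsilon,\mathrm{MP}}$ term by term with standard matrix calculus. The Dirichlet term contributes $\bm{L}\bm{U}$ (up to the customary factor from the $\tfrac12$-normalization of the graph Dirichlet energy, since $\nabla_{\bm{U}}\mathrm{tr}(\bm{U}^T\bm{L}\bm{U})=(\bm{L}+\bm{L}^T)\bm{U}$ with $\bm{L}$ symmetric); each balancing term contributes $\nabla_{\bm{U}^{(\ell)}}\tfrac{\gamma^{(\ell)}}{2m^{(\ell)}}\|(\bm{d}^{(\ell)})^T\bm{U}^{(\ell)}\|_2^2=\tfrac{\gamma^{(\ell)}}{m^{(\ell)}}\bm{d}^{(\ell)}(\bm{d}^{(\ell)})^T\bm{U}^{(\ell)}$, which assembled blockwise over $\ell$ is exactly $\bm{K}\bm{U}$; and the potential contributes $\tfrac1\epsilon P'(\bm{U})$ with $P'$ applied row-wise. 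Collecting the three terms and taking $-\nabla_{\bm{U}}\mathcal{F}_{\epsilon,\mathrm{MP}}$ produces the asserted ODE. I do not expect a genuine obstacle: the only points that need care are fixing the $\Gamma$-limit off partitions --- it is forced to $+\infty$ by the corner-minimum property of $P$ together with the $1/\epsilon$ scaling, which is exactly where that hypothesis on $P$ is used --- and the combinatorial identity $\mathrm{tr}(\bm{U}^T\bm{L}\bm{U})=|\bm{U}|_{\mathrm{MPTV}}$ on partitions; the substantive hypothesis of the continuum theory, equi-coercivity, is here free because the relaxed domain is compact.
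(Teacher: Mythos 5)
Your argument is correct and reaches the same mathematical content as the paper, but by a more self-contained route. The paper's proof consists of two observations: (i) the supra-Laplacian $\bm{L}$ is a \emph{bona fide} unnormalized graph Laplacian of the (weighted) supra-graph, so the $\Gamma$-convergence statement follows verbatim from the single-layer result \cite[Theorem 3.9]{boyd2018simplified}; and (ii) the block structure of $\bm{U}$ yields the block-diagonal gradient $\bm{K}\bm{U}$ of the balance term. You make observation (i) as well, but instead of delegating to the citation you split off the $\epsilon$-independent balance term (using stability of $\Gamma$-convergence under continuous perturbations) and then reprove the finite-graph $\Gamma$-convergence of the Ginzburg--Landau part from scratch: compactness of the simplex-constrained domain, the $\liminf$ inequality via nonnegativity of $P$ and the blow-up of $\tfrac1\epsilon P$ off partitions, the constant recovery sequence, and the combinatorial identity $\mathrm{tr}(\bm{U}^T\bm{L}\bm{U})=|\bm{U}|_{\mathrm{MPTV}}$ on partition matrices. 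What this buys is a proof that does not lean on the precise hypotheses of \cite{boyd2018simplified,van2012gamma} and makes explicit exactly where the corner-minimum property of $P$ is used; what it costs is that you must (and do) state the slightly stronger assumption that $P$ vanishes \emph{only} at the vertices of $\Sigma_{n_c}$, which the proposition phrases more loosely. One point you flag in passing deserves emphasis: as written, $\nabla_{\bm{U}}\,\mathrm{tr}(\bm{U}^T\bm{L}\bm{U})=2\bm{L}\bm{U}$, so the stated gradient flow $-(\bm{L}+\bm{K})\bm{U}-\tfrac1\epsilon P'(\bm{U})$ is consistent with the factor-of-two convention used for $\bm{K}$ (where the $2$ from differentiating the squared norm cancels the $2$ in $2m^{(\ell)}$) only if the Dirichlet term carries an implicit $\tfrac12$ normalization; the paper's own proof is silent on this, and your parenthetical is the honest reading. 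This does not affect the MBO scheme qualitatively but does change the relative weighting of $\bm{L}$ and $\bm{K}$.
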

  \begin{proof}
  	The $\Gamma$-convergence follows from the proof of \cite[Theorem 3.9]{boyd2018simplified} by realizing that the supra-Laplacian $\bm{L}$ defined in \eqref{eq:supra-laplacian} satisfies all properties of unnormalized graph Laplacians of single-layer networks with the particular weighted network structure of the supra-adjacency matrix $\bm{A}$ and that the multi-well potential is defined analogously to the single-layer case.
  	Since $\bm{U}$ can be decomposed into \mbox{$\bm{U}=\begin{bmatrix}(\bm{U}^{(1)})^T & \dots & (\bm{U}^{(L)})^T\end{bmatrix}^T$}, the term $\nabla_{\bm{U}} \sum_{\ell=1}^L \frac{\gamma^{(\ell)}}{2m^{(\ell)}} \| (\bm{d}^{(\ell)})^T\bm{U}^{(\ell)} \|_2^2$ takes the block-diagonal form \mbox{$\bm{K} = \mathrm{blkdiag}\left[\frac{\gamma^{(1)}}{m^{(1)}}\bm{d}^{(1)}(\bm{d}^{(1)})^T, \dots , \frac{\gamma^{(L)}}{m^{(L)}}\bm{d}^{(L)}(\bm{d}^{(L)})^T\right]$}.
  \end{proof}
 
 This relaxation allows the detection of the partitions $\bm{U}$ that approximately minimize balanced multiplex total variation functional, i.e., approximately maximize  multiplex modularity, cf.~\Cref{prop:equivalence}, by computing stationary solutions of the matrix-valued Allen--Cahn equation
 \begin{equation}\label{eq:ODE}
  \frac{\partial \bm{U}}{\partial t} = - \left(\bm{L} + \bm{K}\right)\bm{U} - \frac{1}{\epsilon} P'(\bm{U}).
 \end{equation}
 We discuss the numerical solution of \eqref{eq:ODE} in \Cref{sec:numerical_solution_of_ODEs}.
 
 Similar approaches based on minimizing certain graph Ginzburg--Landau functionals via gradient flows have recently been used in semi-supervised node classification \cite{bertozzi2012diffuse,garcia2014multiclass,budd2021classification,bergermann2021semi}.
 The Ginzburg--Landau functional in that case additionally contains a data fidelity term, which penalizes deviations from community labels that are known \emph{a priori}.
 One possible extension of our method could be to add semi-supervision in a similar manner.
 
 The second major difference between the semi-supervised setting and the modularity maximizing approaches in the single-layer case \cite{hu2013method,boyd2018simplified} as well as the multiplex case \eqref{eq:ODE} is the presence of the (block) rank-$1$ term $\bm{K}$.
 In addition to the usual Laplacian $\bm{L}$ as linear differential operator, $(\bm{L}+\bm{K})$ forms the linear part of the differential equation \eqref{eq:ODE}.
 
 \begin{proposition}\label{prop:spectrum_L_K}
 	Let $\bm{L}\in\R^{nL\times nL}$ be a supra-Laplacian matrix, cf.~\Cref{def:multiplex_network_la}, and $\bm{K}\in\R^{nL\times nL}$ as defined in \Cref{prop:GL_functional_convergence}.
 	Then the discrete linear differential operator $\bm{L} + \bm{K}$ of \eqref{eq:ODE} is symmetric positive semi-definite.
 	If the corresponding multiplex network does not contain a connected component that consists of node-layer pairs that all have zero intra-layer degree, $\bm{L} + \bm{K}$ is strictly positive definite.
 \end{proposition}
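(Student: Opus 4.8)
The plan is to split the quadratic form $\bm{x}^T(\bm{L}+\bm{K})\bm{x}$ into its two natural pieces, prove semi-definiteness of each, and then characterise the common kernel $\ker\bm{L}\cap\ker\bm{K}$ for the strict statement. Both matrices are symmetric: $\bm{L}=\mathrm{diag}(\bm{d})-\bm{A}$ with $\bm{A}=\bm{A}^T$, and $\bm{K}$ is block-diagonal with symmetric blocks. For $\bm{L}$ I would use the standard edge-sum identity for the Laplacian of the componentwise nonnegative supra-adjacency matrix $\bm{A}$, namely $\bm{x}^T\bm{L}\bm{x}=\tfrac12\sum_{p,q=1}^{nL}\bm{A}_{pq}(x_p-x_q)^2\ge 0$. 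For $\bm{K}$, its $\ell$-th diagonal block $\tfrac{\gamma^{(\ell)}}{m^{(\ell)}}\bm{d}^{(\ell)}(\bm{d}^{(\ell)})^T$ (read as the zero block when layer $\ell$ carries no intra-layer edge, i.e.\ $\bm{d}^{(\ell)}=\bm{0}$) is positive semi-definite of rank at most one because $\gamma^{(\ell)},m^{(\ell)}>0$: for any $\bm{y}^{(\ell)}\in\R^n$ it evaluates to $\tfrac{\gamma^{(\ell)}}{m^{(\ell)}}\bigl((\bm{d}^{(\ell)})^T\bm{y}^{(\ell)}\bigr)^2\ge 0$. Summing over blocks gives $\bm{x}^T\bm{K}\bm{x}\ge 0$, hence $\bm{x}^T(\bm{L}+\bm{K})\bm{x}\ge 0$ for all $\bm{x}\in\R^{nL}$, which is the first assertion.

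For strict positive definiteness, since $\bm{L}$ and $\bm{K}$ are each symmetric positive semi-definite, $\bm{x}^T(\bm{L}+\bm{K})\bm{x}=0$ forces $\bm{x}^T\bm{L}\bm{x}=0$ and $\bm{x}^T\bm{K}\bm{x}=0$ separately, i.e.\ $\bm{x}\in\ker\bm{L}\cap\ker\bm{K}$; so it suffices to show this intersection is trivial. From $\bm{L}\bm{x}=0$ and the edge-sum identity, $\bm{x}$ is constant on every connected component of the supra-graph, so $\bm{x}=\sum_p\alpha_p\bm{1}_{\mathcal{C}_p}$ over its components $\mathcal{C}_1,\dots,\mathcal{C}_P\subseteq\mathcal{V}$. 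From $\bm{K}\bm{x}=0$ together with $\gamma^{(\ell)},m^{(\ell)}>0$ we get $(\bm{d}^{(\ell)})^T\bm{x}^{(\ell)}=0$ for every layer with $\bm{d}^{(\ell)}\neq\bm{0}$; since on layer $\ell$ the entries of $\bm{x}^{(\ell)}$ equal $\alpha_p$ on $\mathcal{C}_p\cap\mathcal{V}^{(\ell)}$, this reads $\sum_p\alpha_p\,\mathrm{vol}(\mathcal{C}_p\cap\mathcal{V}^{(\ell)})=0$ for all $\ell$, with $\mathrm{vol}$ the intra-layer volume introduced in \Cref{sec:MPBTV}. For a connected supra-graph there is a single component $\mathcal{C}_1=\mathcal{V}$, so the relation becomes $\alpha_1\cdot 2m^{(\ell)}=0$ for all $\ell$; the hypothesis that the node-layer pairs do not all have zero intra-layer degree means some $m^{(\ell)}>0$, so $\alpha_1=0$ and $\bm{x}=\bm{0}$.

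The semi-definiteness part is routine; the delicate step is the last one, because the constraints coming from $\ker\bm{K}$ act \emph{layerwise} and thus couple distinct supra-graph components that share a layer, while the vectors in $\ker\bm{L}$ are \emph{componentwise} constant. For a connected supra-graph the two notions coincide and the conclusion is immediate as above; in the disconnected case one has to study the nonnegative $P\times L$ matrix $\bigl(\mathrm{vol}(\mathcal{C}_p\cap\mathcal{V}^{(\ell)})\bigr)_{p,\ell}$, and the stated hypothesis only removes the most obvious nontrivial common-kernel vector — one supported on a component whose node-layer pairs all have zero intra-layer degree, which automatically lies in $\ker\bm{K}$ — so the full strict statement should be read with the (natural, and here implicit) assumption that the supra-graph, equivalently the aggregated multiplex network, is connected. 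I would therefore present the argument first under that connectivity assumption, where it is a one-line degree computation, and then comment on how it interacts with the decomposition into connected components in the general case.
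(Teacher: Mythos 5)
Your semi-definiteness argument is essentially the paper's: both decompose the quadratic form as $\bm{x}^T(\bm{L}+\bm{K})\bm{x}=\bm{x}^T\bm{L}\bm{x}+\sum_{\ell}\frac{\gamma^{(\ell)}}{m^{(\ell)}}\bigl((\bm{d}^{(\ell)})^T\bm{x}^{(\ell)}\bigr)^2$ and note each summand is nonnegative. Where you diverge is the definiteness step, and your extra care there is not pedantry --- it exposes a real gap in the paper's own proof. The paper reduces, as you do, to $\ker\bm{L}\cap\ker\bm{K}$, but then tests only the individual component indicators $\bm{\chi}_1,\dots,\bm{\chi}_m$ as ``candidates'' for null vectors, whereas the kernel of $\bm{L}$ is their \emph{span}; a mixed-sign combination $\sum_p\alpha_p\bm{\chi}_p$ can annihilate every layerwise functional $(\bm{d}^{(\ell)})^T(\cdot)$ without any single component being intra-layer isolated. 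A concrete instance: two layers, four physical nodes, intra-layer edges $\{1,2\}$ and $\{3,4\}$ in both layers, all-to-all coupling $\omega>0$. The supra-graph has two components with equal per-layer volumes, every node-layer pair has positive intra-layer degree (so the proposition's hypothesis holds), yet the $\pm1$ difference of the two component indicators lies in $\ker\bm{L}\cap\ker\bm{K}$, so $\bm{L}+\bm{K}$ is singular. More generally, the relevant object is exactly the matrix $\bigl(\mathrm{vol}(\mathcal{C}_p\cap\mathcal{V}^{(\ell)})\bigr)$ you identify: $\bm{L}+\bm{K}$ is definite if and only if this $L\times P$ matrix has trivial kernel in the $\alpha_p$ variables, which fails automatically whenever the number $P$ of supra-components exceeds $L$. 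So your instinct to prove the statement cleanly under connectedness of the supra-graph (where it is, as you say, a one-line degree computation) and to treat the disconnected case via the rank of the volume matrix is the correct repair; the paper's stated sufficient condition is only the special case in which a component indicator itself lies in $\ker\bm{K}$.
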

 \begin{proof}
 	The symmetry of $\bm{L}$ follows from the assumption of undirectedness and $\bm{K}$ is symmetric by definition.
 	It is well-known that the supra-Laplacian $\bm{L}$ is a positive semi-definite matrix where the multiplicity of the zero eigenvalue corresponds to the number of connected components in the multiplex network \cite{gomez2013diffusion}.
 	For multiplex networks with $m\in\N$ connected components and corresponding subsets of node-layer pairs $C_1,\dots,C_m\subset\mathcal{V}$, the $m$-dimensional null space of $\bm{L}$ is spanned by the eigenvectors $\bm{\chi}_1, \dots,\bm{\chi}_m\in\R^{nL}$, where 
 	\begin{equation*}
 	 [\bm{\chi}_i]_j = \begin{cases}
 	 \frac{1}{\sqrt{|C_i|}}, & \text{if the node-layer pair represented by row $j$ of $\bm{\chi}_i$ belongs to component $C_i$,}\\
 	 0, & \text{otherwise,}
 	 \end{cases}
 	\end{equation*}
 	for $i=1,\dots,m$ and $j=1,\dots,nL$ since $\bm{L}$ can be permuted into block diagonal form with $m$ blocks containing the graph Laplacians of the connected components \cite{von2007tutorial}.
 	
 	By definition, each block $\frac{\gamma^{(\ell)}}{m^{(\ell)}}\bm{d}^{(\ell)}(\bm{d}^{(\ell)})^T, \ell=1,\dots,L$, of $\bm{K}$ has rank $1$ with the only non-zero eigenvalue being $\lambda_1^{(\ell)} = \frac{\gamma^{(\ell)}}{m^{(\ell)}}(\bm{d}^{(\ell)})^T\bm{d}^{(\ell)}>0$ corresponding to the eigenvector $\bm{d}^{(\ell)}$.
 	Thus, $\bm{K}$ is positive semi-definite with rank $L$ and non-zero eigenvalues $\lambda_1^{(1)}, \dots, \lambda_1^{(L)}$ and corresponding eigenvectors $\bm{e}_1\otimes \bm{d}^{(1)}, \dots, \bm{e}_L\otimes \bm{d}^{(L)}$.
 	
 	Writing the smallest eigenvalue of $\bm{L}+\bm{K}$ via the Rayleigh quotient yields
 	\begin{align}\label{eq:rayleigh_L_K}
 	\begin{split}
 	 & \min_{\bm{v}\in\R^{nL}, \|\bm{v}\|_2=1} \bm{v}^T(\bm{L}+\bm{K})\bm{v}\\
 	 = & \min_{\bm{v}\in\R^{nL}, \|\bm{v}\|_2=1} \underbrace{\bm{v}^T\bm{L}\bm{v}}_{\geq 0} + \underbrace{\frac{\gamma^{(1)}}{m^{(1)}}\|(\bm{d}^{(1)})^T\bm{v}^{(1)}\|_2^2}_{\geq 0} + \dots + \underbrace{\frac{\gamma^{(L)}}{m^{(L)}}\|(\bm{d}^{(L)})^T\bm{v}^{(L)}\|_2^2}_{\geq 0},
 	 \end{split}
 	\end{align}
 	with the usual block vector notation
 	$\bm{v} = \begin{bmatrix}
 	(\bm{v}^{(1)})^T & \dots & (\bm{v}^{(L)})^T
 	\end{bmatrix}^T$, which shows the positive semi-definiteness.
 	
 	Starting with the term $\bm{v}^T\bm{L}\bm{v}$ in \eqref{eq:rayleigh_L_K}, only possible candidates for eigenvectors to zero eigenvalues of $\bm{L}+\bm{K}$ are $\bm{\chi}_1, \dots,\bm{\chi}_m\in\R^{nL}$.
 	By definition, $\bm{d}^{(\ell)}$ is entry-wise non-negative with zero entries representing intra-layer-isolated node-layer pairs\footnote{The node-layer pairs may possess inter-layer edges since these do not enter the vectors $\bm{d}^{(1)}, \dots , \bm{d}^{(L)}$.}.
 	Hence, $\bm{L}+\bm{K}$ can have a zero eigenvalue only if the block degree vector $\bm{d} = \begin{bmatrix}(\bm{d}^{(1)})^T & \dots & (\bm{d}^{(L)})^T\end{bmatrix}^T$ contains zeros on the non-zero pattern of at least one of the vectors $\bm{\chi}_1, \dots,\bm{\chi}_m$.
 \end{proof}
 
 \begin{remark}
  With the choice $\widetilde{\bm{A}}=\bm{11}^T - \bm{I}$ of inter-layer couplings that we assume throughout the manuscript, the only possible scenario of semi-definiteness (as opposed to strict definiteness) of $\bm{L}+\bm{K}$ is the occurrence of at least one physical node whose node-layer pairs are isolated within all layers.
 \end{remark}

\section{Direct multiplex modularity maximization}\label{sec:dir}

 We call the second method proposed in this work direct gradient flow multiplex modularity maximization (\dir).
 It builds on the realization that a trivial reformulation of multiplex modularity maximization
 \begin{equation*}
 \max_{\substack{\bm{U}\in\{0,1\}^{nL \times n_c}\\\bm{U1}=\bm{1}}} \frac{1}{2\mu} \mathrm{tr}\left(\bm{U}^T\bm{M}\bm{U}\right)
 \end{equation*}
 into
 \begin{equation}\label{eq:dir_mod_min}
 \min_{\substack{\bm{U}\in\{0,1\}^{nL \times n_c}\\\bm{U1}=\bm{1}}} - \mathrm{tr}\left(\bm{U}^T\bm{M}\bm{U}\right)
 \end{equation}
 takes the form of a graph Dirichlet energy minimization problem with indefinite discrete linear differential operator $-\bm{M}\in\R^{nL\times nL}$.
 In particular, the minimization of such a graph Dirichlet energy with the supra-Laplacian $\bm{L}$ as positive semi-definite discrete linear differential operator appears in \eqref{eq:GL_functional} and constitutes a sub-problem of our first method \texttt{MPBTV}.
 
 Analogously to \Cref{sec:MPBTV}, we relax the binary nature of the community affiliation matrix \mbox{$\bm{U}\in\{0,1\}^{nL\times n_c}$} to $\bm{U}\in\R^{nL\times n_c}$ with its rows restricted to the Gibbs simplex $\Sigma_{n_c}$ and propose to solve the relaxed problem via a gradient flow with respect to $\bm{U}$, i.e.,
 \begin{equation}\label{eq:ODE_dir}
  \frac{\partial\bm{U}}{\partial t} = - \nabla_{\bm{U}} \left( - \mathrm{tr}\left(\bm{U}^T\bm{M}\bm{U}\right)\right) = \bm{M}\bm{U}.
 \end{equation}
 This results in a linear first-order matrix-valued ordinary differential equation (ODE) whose numerical solution is discussed in the next section.

\section{Numerical solution of the ODEs}\label{sec:numerical_solution_of_ODEs}
 
 The two methods \texttt{MPBTV} and \dir~introduced in \Cref{sec:MPBTV,sec:dir} are derived as gradient flows of suitable energy functionals and take the form of the matrix-valued differential equations \eqref{eq:ODE} and \eqref{eq:ODE_dir}.
 In this section, we discuss their numerical solution that is based on approximating the corresponding discrete linear differential operators by a subset of their eigenvalues and \mbox{-vectors} and by integrating the temporal derivative with a graph Merriman--Bence--Osher (MBO) scheme \cite{merriman1994motion,merkurjev2013mbo}.

 Although \eqref{eq:ODE} and \eqref{eq:ODE_dir} describe the temporal evolution of community affiliation functions defined on the node-layer pairs of multiplex networks, their numerical solution is to be understood as an optimization procedure minimizing the corresponding energy functional without a physical interpretation of time.
 It can not be guaranteed that the relaxed solutions obtained by our gradient flows correspond to global optima of either objective and it is well-known that the modularity energy landscape typically possesses multiple local optima \cite{good2010performance,boyd2018simplified}.
 We hence aim at exploring different parts of the energy landscape by computing stationary solutions of the ODEs for several, say $n_{\mathrm{runs}}\in\N$, initial conditions $\bm{U}_0\in\{0,1\}^{nL\times n_c}$ for which we randomly draw one standard basis vector from $\R^{n_c}$ per row.
 Experiments with real-valued random initial conditions $\bm{U}_0\in\R^{nL\times n_c}$ led to similar classification results and we choose binary initial conditions to more closely match the original combinatorial problem formulation.
 As it is customary for classification techniques relying on randomized initial conditions and since we have a quality function for comparing different solutions at hand, we then choose the solution obtaining the highest multiplex modularity score as community partition.
 
 We now discuss our procedure for solving the initial value problems \eqref{eq:ODE} and \eqref{eq:ODE_dir} together with the initial condition $\bm{U}_0\in\{0,1\}^{nL\times n_c}$.
 The linear part of \eqref{eq:ODE} is solved exactly by $e^{-t(\bm{L} + \bm{K})}\bm{U}_0$ for all $t>0$, where $e^{-t(\bm{L} + \bm{K})}$ denotes the matrix exponential of the negative \mbox{(semi-)} definite discrete linear differential operator $-(\bm{L} + \bm{K})\in\R^{nL\times nL}$, cf.~\Cref{prop:spectrum_L_K}.
 Similarly, the exact solution of \eqref{eq:ODE_dir} reads $e^{t\bm{M}}\bm{U}_0$ for all $t>0$ and the indefinite multiplex modularity matrix $\bm{M}\in\R^{nL\times nL}$.
 
 Due to the undirectedness of the considered multiplex networks, both $-(\bm{L} + \bm{K})$ and $\bm{M}$ are symmetric and hence diagonalizable and denoting the eigenvalues and eigenvectors normalized in Euclidean norm of either matrix by $\lambda_i\in\R$ and $\bm{\phi}_i\in\R^{nL}, i=1,\dots,nL$, respectively, their matrix exponentials can be written as
 \begin{equation}\label{eq:matexp_spectral}
  \sum_{i=1}^{nL} e^{\lambda_i} \bm{\phi}_i\bm{\phi}_i^T,
 \end{equation}
 where $e^{\lambda_i}\in\R$ denotes the scalar exponential function applied to the eigenvalues \cite{higham2008functions}.
 Since the eigenvectors are normalized, the magnitude of $e^{\lambda_i}$ determines the relative contribution of the terms $e^{\lambda_i}\bm{\phi}_i\bm{\phi}_i^T, i=1,\dots,nL$ to the sum \eqref{eq:matexp_spectral}.
 Hence, in our case, $e^{-t(\bm{L} + \bm{K})}$ is dominated by the small eigenvalues of $(\bm{L} + \bm{K})$ while $e^{t\bm{M}}$ is dominated by the large positive eigenvalues of $\bm{M}$.
 
 Clearly, the truncation of \eqref{eq:matexp_spectral} to a relatively small number of $k\in\N$ eigenvalues and -vectors tremendously increases the efficiency of evaluating the corresponding matrix exponential\footnote{Note that alternative truncation strategies are discussed in \cite{budd2021classification}, which may be more appropriate for very small time step sizes $\Delta t$.}.
 We further found $k$ to be the most decisive hyper-parameter of our methods in terms of influencing the quality of the obtained community detection results.
 This is not entirely surprising since similar observations have been made for node classification methods relying on the same spectral information of similar matrices.
 It is well-known that the eigenvectors corresponding to the smallest non-zero eigenvalues of different graph Laplacians solve several balanced multiclass graph cut problems \cite{von2007tutorial}.
 In particular, the signs of the entries of the Fiedler vector yield optimal bi-partitions of single-layer networks in this sense.
 Moreover, graph semi-supervised node classification techniques based on solving fidelity-forced Allen--Cahn equations typically rely on approximations of the chosen graph Laplacian by its small eigenvalues and corresponding eigenvectors \cite{bertozzi2012diffuse,garcia2014multiclass,budd2021classification,bergermann2021semi}.
 Similarly, the sign of the eigenvector to the largest positive eigenvalue of the single-layer modularity matrix, cf.~\Cref{rem:single-layer_modularity}, constitutes Newman's spectral method for maximizing modularity for network bi-partitions \cite{newman2006modularity,newman2006finding}.
 Hence, in this sense, our method \dir\ provides the means to utilize additional spectral information of the multiplex modularity matrix for finding modularity-maximizing partitions of node-layer pairs into an arbitrary number of communities.
 
 For the approximation of the desired eigenvalues and -vectors we rely on the Krylov--Schur method as implemented in \texttt{Matlab}'s \texttt{eigs} function that iteratively extends a Krylov subspace, transforms it to Schur form, and discards non-converged eigenvalues \cite{stewart2002krylov}.
 The computational bottleneck of this method is posed by the matrix-vector products underlying the generation of the Krylov subspaces.
 This technique is computationally efficient and scalable to large-scale problems since the supra-adjacency and -Laplacian of multiplex networks are sparse with $\mathcal{O}(nL)$ non-zero entries in many applications\footnote{Note that for fully-connected kernel adjacency matrices Fourier methods can be leveraged to obtain similar linear runtime complexities, cf., e.g., \cite{bergermann2021semi}.}.
 Moreover, matrix-vector multiplications of $\bm{K}$ with \mbox{$\bm{v} = \begin{bmatrix}
 	(\bm{v}^{(1)})^T & \dots & (\bm{v}^{(L)})^T
 	\end{bmatrix}^T$} can be implemented as \texttt{Matlab} function handles via inner products $(\bm{d}^{(\ell)})^T\bm{v}^{(\ell)}, \ell=1,\dots,L$ without explicitly forming the dense diagonal blocks of $\bm{K}$.
 In practice, we observed that the computation of the large eigenvalues and -vectors of the multiplex modularity matrix $\bm{M}$ was usually faster than the computation of the small eigenvalues and -vectors of $\bm{L}+\bm{K}$ since small-magnitude eigenvalue computations typically require more expensive shift-and-invert strategies \cite{golub2013matrix}, which is implemented in \texttt{Matlab}'s \texttt{eigs} function.
 Moreover, for some multiplex networks we observed somewhat clustered small eigenvalues in $\bm{L}+\bm{K}$, which typically slows down the solver's speed of convergence, cf., e.g., \cite{golub2013matrix}.
 We report timings of  (offline) eigenvalue computations in \Cref{tab:GT_networks_runtimes} and \Cref{fig:runtimes_beach_image_offline}.
 
 So far in this section, we have ignored the non-linear derivative $P'(\bm{U})\in\R^{nL\times n_c}$ of the multi-well potential in \eqref{eq:ODE}.
 For certain IMEX schemes that are widely used for the time integration of similar problems, the evaluation of $P'(\bm{U})$ in each time step is the computationally most expensive part after the discrete linear differential operator has been approximated by a subset of its eigevalues and -vectors \cite{bertozzi2012diffuse,garcia2014multiclass,budd2021classification,bergermann2021semi}.
 An alternative approach that is also heavily used for the considered class of non-linear graph-based ODEs leverages a graph extension of the continuum Merriman--Bence--Osher (MBO) scheme \cite{merriman1994motion} that approximates the mean curvature flow of non-linear reaction-diffusion equations via thresholding dynamics \cite{merkurjev2013mbo}.
 The graph MBO scheme consists of two iteratively repeated stages: a diffusion step integrating the linear part of the ODE and a thresholding step that maps each node's community affiliation vector to the closest standard basis vector, i.e., the community it most likely belongs to.
 
 The MBO diffusion step of \eqref{eq:ODE} and one time step of \eqref{eq:ODE_dir} are realized by evaluating the truncated version of \eqref{eq:matexp_spectral} with $k$ summands as discussed previously for a time step $\Delta t>0$.
 The thresholding step reflects the contribution of the multi-well potential $\sum_{i=1}^n\sum_{\ell=1}^L P(\bm{U}^{(\ell)}_i)$ in \eqref{eq:GL_functional} to drive the rows of $\bm{U}$ towards corners of the simplex $\Sigma_{n_c}$.
 In the single-layer case it has been shown that the graph MBO scheme is one member of a class of semi-discrete implicit Euler schemes solving fidelity-forced graph Allen--Cahn equations \cite{budd2021classification}.
 Furthermore, replacing the rows of $\bm{U}$ by standard basis vectors can be interpreted as relating the relaxed formulation $\bm{U}\in\R^{nL\times n_c}$ of \cref{prop:GL_functional_convergence} to the binary formulation $\bm{U}\in\{0,1\}^{nL\times n_c}$ of \Cref{def:multiplex_modularity} and \Cref{prop:equivalence}.
 
 \begin{algorithm}[t]
 	\vspace{0.5em}
 	\begin{tabular}{lll}
 		\vspace{1mm}
 		Input:
 		& $\bm{\mathcal{D}}$,& Differential operator, $-(\bm{L}+\bm{K})$ for \texttt{MPBTV}, $\bm{M}$ for \dir\\
 		Parameters: & \multicolumn{2}{l}{$\Delta t,\text{tol}>0,$}\\
 		& \multicolumn{2}{l}{$n_c,k,n_{\mathrm{runs}},\text{max\_iter}\in\mathbb{N}.$}\\
 	\end{tabular}\\
 	
 	\begin{algorithmic}[1]
 		\State Compute $k$ largest real eigenvalues $\bm{\Lambda}_k$ and corresponding eigenvectors $\bm{\Phi}_k$ of $\bm{\mathcal{D}}$
 		
 		\For{$i=1:n_{\mathrm{runs}}$}
 		\State Sample random initial conditions $\bm{U}_0\in\{0,1\}^{nL\times n_c}$
 		\For{$j=1:\text{max\_iter}$}
 		\State $\bm{V}_{j} = \bm{\Phi}_k\exp((\Delta t) \bm{\Lambda}_k)\bm{\Phi}_k^T\bm{U}_{j-1}$\hfill\% Diffusion
 		\For{$k=1:nL$}\hfill\% Thresholding
 		\State $k^\ast = \text{argmax}~\bm{V}_{j}(k,:)$
 		\State $\bm{U}_{j}(k,:)=\bm{e}_{k^\ast}^T$
 		\EndFor
 		\If{$\|\bm{U}_{j}-\bm{U}_{j-1}\|_2<\text{tol}$}
 		\State \textbf{break}
 		\EndIf
 		\EndFor
 		\State Compute $Q(\bm{U}_{j})$
 		\EndFor
 	\end{algorithmic}
 	\vspace{0.5em}
 	\begin{tabular}{ll}
 		\vspace{1mm}
 		Output: & $\bm{U}_{j}$ with highest multiplex modularity $Q$.\\
 	\end{tabular}
 	\caption{MBO scheme solving the gradient flow-based multiplex modularity maximization problems \eqref{eq:ODE} and \eqref{eq:ODE_dir}.}\label{alg}
 \end{algorithm}
 
 Since the ODE \eqref{eq:ODE_dir} is linear, there is no theoretical requirement of a thresholding procedure.
 It has, however, been observed for matrix-valued ODEs of this type \cite{garcia2014multiclass,bergermann2021semi} that the diffusion operator may drive rows of the solution $\bm{U}$ far away from the simplex $\Sigma_{n_c}$.
 For this reason, it has become customary to apply a simplex projection after each time step.
 One example technique detects the closest point on $\Sigma_{n_c}$ that does generally not coincide with a standard basis vector as described in \cite{chen2011projection}.
 We have, however, observed experimentally that \eqref{eq:ODE_dir} produces better solutions in terms of multiplex modularity and classification accuracy when MBO thresholding instead of the simplex projection is applied --- presumably due to the relation to the original binary problem formulation discussed earlier.
 Formally, this procedure corresponds to solving the gradient flow of the energy functional \eqref{eq:dir_mod_min} with added multi-well potential term $\sum_{i=1}^n\sum_{\ell=1}^L P(\bm{U}^{(\ell)}_i)$, which drives the solution towards the corners of the simplex.
 
 \Cref{alg} summarizes our procedure for the numerical solution of \eqref{eq:ODE} and \eqref{eq:ODE_dir}.
 After the offline computation of the eigenvalues and -vectors of the relevant discrete linear differential operator in line $1$, the outer for-loop represents the solution of the ODE for $n_{\mathrm{runs}}\in\N$ different random initial conditions $\bm{U}_0$.
 In line $5$, we use the compact notation $\bm{\Lambda}_k=\text{diag}[\lambda_1,\dots,\lambda_k]\in\R^{k\times k}$ and $\bm{\Phi}_k=\begin{bmatrix}
 \bm{\phi}_1 & \dots & \bm{\phi}_k
 \end{bmatrix}\in\R^{nL\times k}$ to represent the truncated version of \eqref{eq:matexp_spectral} as the diffusion step.
 Note that the eigenvalues $\bm{\Lambda}_k$ of $-(\bm{L}+\bm{K})$ are non-positive, cf.~\Cref{prop:spectrum_L_K}, and both matrices $-(\bm{L}+\bm{K})$ and $\bm{M}$ depend on the parameters $\gamma, \omega\in\R_{\geq 0}$.
 The node-layer-wise MBO thresholding in lines $6$ to $9$ can be implemented in an efficient way that avoids the for-loop by determining the row-wise $\arg\max$ of $\bm{V}_j$ and assembling the rows of $\bm{U}_j$ as the corresponding standard basis vectors in a vectorized way.
 As stopping criterion, we prescribe a maximum number of iterations $\text{max\_iter}\in\N$.
 Furthermore, we stop the iteration if the norm of the distance between two consecutive iterates is below a prescribed tolerance $\text{tol}\ll 1$ indicating that the trajectory has reached or come close to a stationary point.

\section{Numerical experiments}\label{sec:numerical_experiments}
 
 In this section, we test our methods \texttt{MPBTV} and \dir~on a range of real-world multiplex networks of different types and sizes.
 We further consider five community detection methods from the literature that yield non-overlapping partitions of the node-layer pairs of multiplex networks: GenLouvain \cite{mucha2010community}, Leiden MP \cite{traag2019louvain}, Infomap \cite{de2015identifying}, MDLP \cite{boutemine2017mining}, and LART \cite{kuncheva2015community}\footnote{We use the \texttt{Matlab} implementation of GenLouvain that is publicly available under \url{https://github.com/GenLouvain/GenLouvain} and contains a full matrix version as well as a function handle version, the multiplex functionality of the \texttt{python} package leidenalg, cf.~\url{https://leidenalg.readthedocs.io/en/stable/multiplex.html}, the \texttt{python} implementation of Infomap and MDLP from the \texttt{multinet} library, cf.~\url{https://github.com/uuinfolab/py_multinet/}, and the \texttt{python} implementation of LART that is available under \url{https://bitbucket.org/uuinfolab/20csur/src/master/algorithms/LART/}.}.
 GenLouvain and Leiden MP are designed to optimize modularity, Infomap and LART are random walk methods, and MDLP is based on label propagation.
 We compare the performance of the methods with respect to multiplex modularity and if ground truth labels are available we additionally compare classification accuracies and NMI \cite{strehl2002cluster} scores\footnote{We define classification accuracy as the proportion of correctly classified node-layer pairs with respect to ground truth labels.
 The unsupervised nature of the methods requires the matching of detected communities to ground truth communities before computing classification accuracies and NMIs.
 To this end, we loop over the detected communities in descending order with respect to their cardinality and match each to the respective ground truth community with the largest overlap (without replacement, e.g., no ground truth community is assigned to more than one detected community).}.
 We ran LART both with the (recommended) standard resolution parameter $\gamma=1$ and the value of $\gamma$ used in our methods as well as GenLouvain and report the LART results obtaining the higher multiplex modularity.
 For methods depending on random initializations, i.e., different random conditions $\bm{U}_0$ for our methods or randomized node-layer pair orderings in GenLouvain, we report maximum values over $n_{\mathrm{runs}}\in\N$ initializations.
 In \Cref{tab:small_networks_modularity,tab:GT_networks_modularity,tab:GT_networks_accuracy,tab:GT_networks_NMI,tab:beach_64th_comparison}, dark gray boxes with bold font highlight largest and light gray boxes second-largest values of the displayed quantities.
 
 \texttt{Matlab} codes for replicating the numerical experiments are publicly available under \url{https://github.com/KBergermann/GradFlowModMax}.
 All runtimes are measured on an Intel i5-8265U CPU with $4 \times 1.60-3.90$ GHz cores, 16 GB RAM, and \texttt{Matlab} R2020b.
 Termination criteria of \Cref{alg} are chosen $\text{max\_iter} = 300$ and $\text{tol} = 10^{-8}$.

\subsection{Small real-world multiplex networks}
 
 We start by running the multiplex community detection methods on five small, unweighted, and publicly available\footnote{under \url{https://manliodedomenico.com/data.php}} real-world multiplex networks from genetic and social applications.
 \Cref{tab:small_networks_hyper_parameters} shows the number $n$ of nodes and $L$ of layers of each network as well as hyper-parameter values for $\gamma, n_c,$ and $k$ that were found to yield high multiplex modularity scores for \texttt{MPBTV} and \dir~in a random search.
 The remaining hyper-parameters are chosen $\omega = 1, \Delta t = 1,$ and $n_{\mathrm{runs}}=50$ throughout this subsection.
 
 \Cref{tab:small_networks_modularity} shows the maximum multiplex modularity scores obtained by the different methods over $n_{\mathrm{runs}}=50$ random initializations.
 Infomap did not produce valid community partitions\footnote{i.e., a subset of node-layer pairs was not assigned a community label} on the ``Danio rerio'' and ``Human herpes 4'' networks.
 Our two methods either obtain the highest multiplex modularities or range $0.001$ or $0.002$ below the highest score.
 \texttt{MPBTV} and \dir~obtain the improvement of multiplex modularity over GenLouvain for the Florentine families network by assigning family ``Barbadori'' to the ``Medici'' community while keeping the overall number of communities constant.
 
 \begin{table}[b]
 	\centering
 	\begin{tabular}{lccccc}
 		\hline\hline
 		&Danio r.&Florent.\ fam.&Hepatitus C&H.\ herpes 4&Oryctolagus\\\hline\hline
 		$n$ &156&17&106&217&145\\
 		$L$ &5&2&3&4&3\\\hline
 		$\gamma$ &1.2&0.6&1.5&1&0.4\\
 		$n_c$ &16&3&40&11&13\\
 		$k$ (\texttt{MPBTV}) &30&4&90&13&18\\
 		$k$ (\dir) &27&7&80&12&18\\\hline\hline
 	\end{tabular}
 	\caption{Network size of five small real-world multiplex networks and hyper-parameter values used in the numerical experiments.
 		``Danio r.'' abbreviates ``Danio rerio'', ``Florent.\ fam.'' stands for ``Florentine families'', and ``H.\ herpes 4'' for ``Human herpes 4''.}\label{tab:small_networks_hyper_parameters}
 \end{table}
 
 \begin{table}
 	\centering
 	\begin{tabular}{lccccc}
 		\hline\hline
 		&Danio r.&Florent.\ fam.&Hepatitus C&H.\ herpes 4&Oryctolagus\\\hline\hline
 		\texttt{MPBTV} & \second{0.974} & \first{0.681} & \second{0.668} & 0.906
 		& \second{0.949}\\
 		\dir & 0.965 & \first{0.681} & \first{0.682} & \second{0.909} & 0.945\\\hline
 		GenLouvain \cite{mucha2010community} & \first{0.976} & \second{0.673} & 0.627 & \first{0.910} & \first{0.950}\\
 		Leiden MP \cite{traag2019louvain}& \first{0.976} & \first{0.681} & 0.627 & \first{0.910} & \first{0.950}\\
 		Infomap \cite{de2015identifying} & --- & 0.659 & 0.583 & --- & \first{0.950}\\
 		LART \cite{kuncheva2015community} & 0.084 & 0.558 & -0.032 & 0.095 & 0.201\\
 		MDLP \cite{boutemine2017mining} & 0.969 & 0.633 & 0.583 & 0.908 & \first{0.950}\\\hline\hline
 	\end{tabular}
 	\caption{Multiplex modularity scores obtained by different community detection methods in the setting of \Cref{tab:small_networks_hyper_parameters}.}\label{tab:small_networks_modularity}
 \end{table}

\subsection{Ground truth multiplex networks}\label{sec:numerical_experiments_GT_networks}
 
 In the second set of experiments, we consider the seven data sets 3sources~\cite{liu2013multi}, BBC~\cite{greene2005producing}, BBCS~\cite{greene2009matrix}, Citeseer~\cite{lu2003link}, Cora~\cite{mccallum2000automating}, WebKB~\cite{craven1998learning}, and Wikipedia~\cite{rasiwasia2010new} containing feature vector data from different data sources as well as ground truth labels for all data points.
 We construct small- to medium-sized multiplex networks by identifying each data source with a layer and each data point with a physical node.
 Analogously to \cite{mercado2018power}, we construct sparse weighted nearest-neighbor graphs for each layer where distances are measured and edges are weighted by the feature vectors' Pearson correlations.
 
 \begin{table}
 	\centering
 	\begin{tabular}{lccccccc}
 		\hline\hline
 		&3Sources&BBC&BBCS&Citeseer&Cora&WebKB&Wiki.\\\hline\hline
 		$n$ & 169 & 685 & 544 & 3312 & 2708 & 187 & 693\\
 		$L$ & 3 & 4 & 2 & 2 & 2 & 2 & 2\\\hline
 		$\gamma$ & 1.2 & 0.8 & 0.6 & 0.6 & 0.8 & 0.6 & 1\\
 		$n_c$ & 6 & 5 & 5 & 6 & 7 & 5 & 10\\
 		$k$ (\texttt{MPBTV}) & 10 & 4 & 34 & 20 & 18 & 3 & 13\\
 		$k$ (\dir) & 12 & 11 & 11 & 27 & 28 & 6 & 18\\\hline\hline
 	\end{tabular}
 	\caption{Network size of seven medium-sized multiplex networks with ground truth labels.
 		The numbers of communities $n_c$ are prescribed by the ground truth, the resolution parameters $\gamma$ are chosen such that GenLouvain detects $n_c$ communities, and the numbers $k$ of eigenvalues and -vectors for \texttt{MPBTV} and \dir~are chosen by a grid search.
 		``Wiki.'' abbreviates ``Wikipedia''.}\label{tab:GT_networks_hyper_parameters}
 \end{table}
 
 \Cref{tab:GT_networks_hyper_parameters} shows the number $n$ of nodes and $L$ of layers of the resulting multiplex networks.
 Since the number $n_c$ of communities is known \emph{a priori}, we performed a grid search over the resolution parameter $\gamma$ and choose the value for which GenLouvain correctly identifies the prescribed number $n_c$ of communities.
 The numbers $k$ of eigenvalues and -vectors used by \texttt{MPBTV} and \dir~were also chosen by a grid search.
 \Cref{tab:GT_networks_runtimes} shows that this can be performed quite inexpensively once the largest considered number $k$ of eigenvalues and -vectors has been obtained in offline computations.
 The table compares runtimes of the \texttt{Matlab} implementations of \texttt{MPBTV}, \dir, and GenLouvain\footnote{The results in \Cref{tab:GT_networks_runtimes} use the GenLouvain implementation that is based on assembling the full multiplex modularity matrix.
 	An alternative implementation with function handles is considered in \Cref{fig:runtimes_beach_image}.}.
 Runtimes for other methods are not included since these are implemented in \texttt{python}.
 
 \Cref{tab:GT_networks_runtimes} shows that for most networks the runtime of offline computations of the largest real eigenvalues and corresponding eigenvectors of $-(\bm{L}+\bm{K})$ for \texttt{MPBTV} and $\bm{M}$ for \dir~is of the same order as one run of GenLouvain, i.e., one execution with randomized node-layer pair ordering.
 One run of \texttt{MPBTV} and \dir~corresponds to the solution of \eqref{eq:ODE} and \eqref{eq:ODE_dir} by \Cref{alg} for one initial condition $\bm{U}_0$, respectively, which is orders of magnitude faster.
 Moreover, the per run runtime of \texttt{MPBTV} and \dir~scales better with respect to the problem size than that of GenLouvain, which leads to smaller factors between the runtimes for the smaller 3Sources and WebKB networks.
 The ``Citeseer'' and ``Cora'' networks are examples in which the discrete linear differential operators $\bm{L}+\bm{K}$ exhibit clustered small eigenvalues, cf.~\Cref{sec:numerical_solution_of_ODEs}.
 The relatively long offline runtimes of \texttt{MPBTV} for these networks are owed to an increase of the Krylov subspace dimension in the Krylov--Schur eigensolver from the standard value $2k$ to $4k$.
 The remaining hyper-parameters are chosen $\omega=1, \Delta t=0.4, n_{\mathrm{runs}}=20$ throughout this subsection.
 
 \begin{table}
 	\centering
 	\begin{tabular}{llccccccc}
 		\hline\hline
 		&&3Sources&BBC&BBCS&Citeseer&Cora&WebKB&Wiki.\\\hline\hline
 		\multirow{2}{*}{\texttt{MPBTV}} & offline & 0.033 & 0.218 & 0.220 & 37.866 & 29.939 & 0.032 & 0.174\\
 		& per run & 0.001 & 0.002 & 0.002 & 0.011 & 0.010 & 0.001 & 0.005\\\hline
 		\multirow{2}{*}{\dir} & offline & 0.012 & 0.163 & 0.035 & 3.708 & 2.678 & 0.006 & 0.059\\
 		& per run & 0.001 & 0.004 & 0.002 & 0.015 & 0.011 & 0.002 & 0.003\\\hline
 		\multirow{2}{*}{GenLouvain \cite{mucha2010community}} & offline & 0.003 & 0.074 & 0.022 & 2.700 & 1.761 & 0.004 & 0.035\\
 		& per run & 0.040 & 0.442 & 0.223 & 11.071 & 5.988 & 0.037 & 0.503\\\hline\hline
 	\end{tabular}
 	\caption{Runtimes of the three methods implemented in \texttt{Matlab} in seconds.
 		The offline computations of GenLouvain denote the assembling of the multiplex modularity matrix while ``per run'' refers to the execution of the method for one randomized node-layer pair ordering.
 		Offline runtimes of \texttt{MPBTV} and \dir~correspond to eigenvalue computations while ``per run'' refers to the solution of \eqref{eq:ODE_dir} and \eqref{eq:ODE}, respectively, cf.~\Cref{sec:numerical_solution_of_ODEs}.
 		All runtimes are averaged over $10$ independent code executions.}\label{tab:GT_networks_runtimes}
 \end{table}
 
 \Cref{tab:GT_networks_accuracy,tab:GT_networks_modularity,tab:GT_networks_NMI} compare the performance of all multiplex community detection methods in terms of multiplex modularity, classification accuracy, and NMI.
 All values denote maximum values over $n_{\mathrm{runs}}=20$ runs.
 \Cref{tab:GT_networks_modularity} shows that GenLouvain obtains the highest multiplex modularity scores for essentially all networks with Leiden MP, \texttt{MPBTV}, and \dir~often trailing closely behind.
 Note that for ``WebKB'', the methods Infomap, LART, and MDLP obtain higher multiplex modularity scores with the trivial classifier placing all node-layer pairs in the same community.
 This example shows that modularity is not necessarily ideal as sole quality measure since distinctly higher classification accuracies and NMIs are obtained by non-trivial partitions detected by \texttt{MPBTV}, \dir, GenLouvain, and Leiden MP, cf.~\Cref{tab:GT_networks_accuracy,tab:GT_networks_NMI}.
 Overall, the results in \Cref{tab:GT_networks_accuracy,tab:GT_networks_NMI} show that \texttt{MPBTV} and \dir~compare favorably with the remaining methods with respect to classification accuracy and NMI.
 In particular, our methods often beat GenLouvain in these metrics albeit obtaining slightly lower multiplex modularity scores.

\begin{table}
	\centering
	\begin{tabular}{lccccccc}
		\hline\hline
		&3Sources&BBC&BBCS&Citeseer&Cora&WebKB&Wiki.\\\hline\hline
		\texttt{MPBTV} &0.581&\second{0.721}&0.506&0.608&0.511&0.351&\second{0.608}\\
		\dir &\second{0.584}&0.714&0.505&0.591&0.486&0.465&0.442\\\hline
		GenLouvain \cite{mucha2010community} &\first{0.593}&\first{0.742}&\first{0.511}&\first{0.617}&\first{0.559}&\second{0.479}&\first{0.691}\\
		Leiden MP \cite{traag2019louvain}& 0.579 & 0.716 & \second{0.508} & \second{0.612} & \second{0.553} & \first{0.607} & \first{0.691}\\
		Infomap \cite{de2015identifying} & 0.533 & 0.717 & 0.444 & 0.554 & 0.502 & \first{0.607} & 0.513\\
		LART \cite{kuncheva2015community} & 0.179 & 0.529 & 0.444 & 0.485 & 0.286 & \first{0.607} & 0.046\\
		MDLP \cite{boutemine2017mining} & 0.582 & 0.581 & 0.501 & 0.485 & 0.286 & \first{0.607} & 0.424\\\hline\hline
	\end{tabular}
	\caption{Multiplex modularity scores obtained by different community detection methods in the setting of \Cref{tab:GT_networks_hyper_parameters}.}\label{tab:GT_networks_modularity}
\end{table}

\begin{table}
	\centering
	\begin{tabular}{lccccccc}
		\hline\hline
		&3Sources&BBC&BBCS&Citeseer&Cora&WebKB&Wiki.\\\hline\hline
		\texttt{MPBTV} &\second{0.876}&\first{0.910}&\first{0.911}&\first{0.722}&0.544&0.521&0.398\\
		\dir &0.838&\second{0.877}&0.808&0.648&0.454&\second{0.666}&0.478\\\hline
		GenLouvain \cite{mucha2010community} &0.852&0.854&0.841&\second{0.712}&\first{0.692}&0.663&0.413\\
		Leiden MP \cite{traag2019louvain}& \first{0.913} & 0.830 & 0.623 & 0.598 &\second{0.612} & \first{0.694} & 0.403\\
		Infomap \cite{de2015identifying} & 0.716 & 0.685 & 0.355 & 0.477 & 0.281 & 0.551 & \first{0.592}\\
		LART \cite{kuncheva2015community} & 0.424 & 0.330 & 0.355 & 0.212 & 0.302 & 0.551 & 0.150\\
		MDLP \cite{boutemine2017mining} & 0.834 & 0.247 & \second{0.906} & 0.212 & 0.302 & 0.551 & \second{0.489}\\\hline\hline
	\end{tabular}
	\caption{Classification accuracies obtained by different community detection methods in the setting of \Cref{tab:GT_networks_hyper_parameters}.}\label{tab:GT_networks_accuracy}
\end{table}

\begin{table}
	\centering
	\begin{tabular}{lccccccc}
		\hline\hline
		&3Sources&BBC&BBCS&Citeseer&Cora&WebKB&Wiki.\\\hline\hline
		\texttt{MPBTV} & \second{0.804} & \first{0.768} & 0.762 & \first{0.452} & 0.388 & 0.238 & 0.303\\
		\dir & \first{0.813} & 0.694 & 0.678 & \second{0.372} & 0.320 & \first{0.340} & \second{0.512}\\\hline
		GenLouvain \cite{mucha2010community} & 0.800 & \second{0.710} & \second{0.777} & \first{0.452} & \first{0.519} & 0.243 & 0.249\\
		Leiden MP \cite{traag2019louvain}& 0.787 & 0.645 & 0.588 & 0.364 & \second{0.409} & \second{0.339} & 0.242 \\
		Infomap \cite{de2015identifying} & 0.607 & 0.618 & 0 & 0.329 & 0.334 & 0 & \first{0.544}\\
		LART \cite{kuncheva2015community} & 0.428 & 0.085 & 0 & 0 & 0 & 0 & 0.480\\
		MDLP \cite{boutemine2017mining} & 0.731 & 0.353 & \first{0.826} & 0 & 0 & 0 & 0.456\\\hline\hline
	\end{tabular}
	\caption{Normalized mutual information (NMI) scores obtained by different community detection methods in the setting of \Cref{tab:GT_networks_hyper_parameters}.}\label{tab:GT_networks_NMI}
\end{table}

\subsection{Image data}
 
 The final set of numerical experiments is performed on the labeled image shown in \Cref{fig:beach_image} created by the authors.
 The same image has previously been studied in semi-supervised node classification \cite{bergermann2021semi}. 
 We construct multiplex networks with $L=2$ layers that separately record RGB channels and $xy$ pixel coordinates.
 Identifying each pixel with a physical node, this separate treatment allows the construction of unweighted nearest-neighbor graphs with $40$ neighbors in the RGB layer and $10$ neighbors in the $xy$ layer where nearest neighbors are determined by Pearson correlations of the corresponding feature vectors.
 We experimentally found this model to yield superior classification results compared to single-layer approaches taking only the RGB channels or RGB and $xy$ information as a $5$-dimensional feature space into account.
 
 \Cref{fig:beach_image_GT} shows that ground truth labels consist of $n_c=5$ classes.
 However, since the class ``swimming object'' occupies less than $1\%$ of all pixels and is difficult to detect due to heterogeneous distributions in both RGB and $xy$ features, we found the choice $n_c=4$ to give better classification results across different community detection methods.
 We choose a relatively large layer-coupling parameter $\omega = 10$ since smaller values induce an overly high degree of smoothness in the labels of the $xy$ layer that introduces classification errors.
 Due to the relatively high dimension of the modularity landscape ($nL=304\,720$ for the multiplex network corresponding to \Cref{fig:beach_image_im}) we further choose a relatively large number $n_{\mathrm{runs}}=100$ of random initial conditions for all methods.
 We utilized the available ground truth information to find the resolution parameter $\gamma=0.1$ that is required for GenLouvain to correctly identify the prescribed number $n_c$ of communities.
 Exceptions are the downscaled version\footnote{In the context of image data, we refer to downscaling when the number of pixels in both $x$ and $y$ direction is reduced by a common factor of $2$ without smoothing.} of \Cref{fig:beach_image_im} considered in \Cref{tab:beach_64th_comparison} as well as the three smallest networks in \Cref{fig:runtimes_beach_image}, which require the choice $\gamma=0.2$.
 The remaining hyper-parameters are chosen $k=9$ and $\Delta t = 1$ for both \texttt{MPBTV} and \dir~throughout this subsection.
 Similarly to \Cref{sec:numerical_experiments_GT_networks}, we increase the Krylov subspace dimension in the Krylov--Schur eigensolver for \texttt{MPBTV} from the standard value $2k$ to $3k$ to ensure convergent eigencomputations.
 
 \begin{figure}
 	\subfloat[Image]{
 		\includegraphics[width=.49\textwidth]{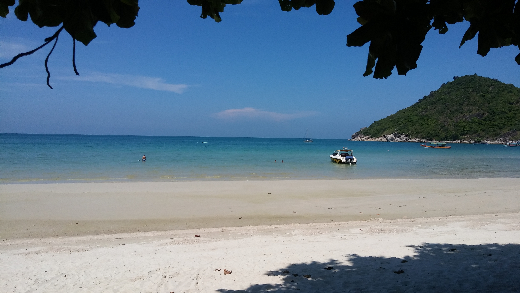}\label{fig:beach_image_im}
 	}
 	\subfloat[Ground truth labels]{
 		\includegraphics[width=.49\textwidth]{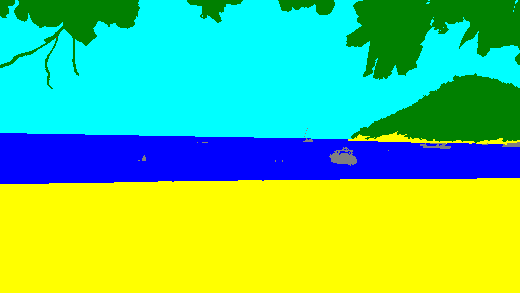}\label{fig:beach_image_GT}
 	}
 	\caption{Test image with $293 \times 520$ pixels and corresponding ground truth labels.
 		Green label colors represent ``tree'', yellow ``beach'', dark blue ``sea'', light blue ``sky'', and gray ``swimming object'' such as boats or humans.}\label{fig:beach_image}
 \end{figure}
 
 \begin{figure}
 	\subfloat[RGB layer]{
 		\includegraphics[width=.49\textwidth]{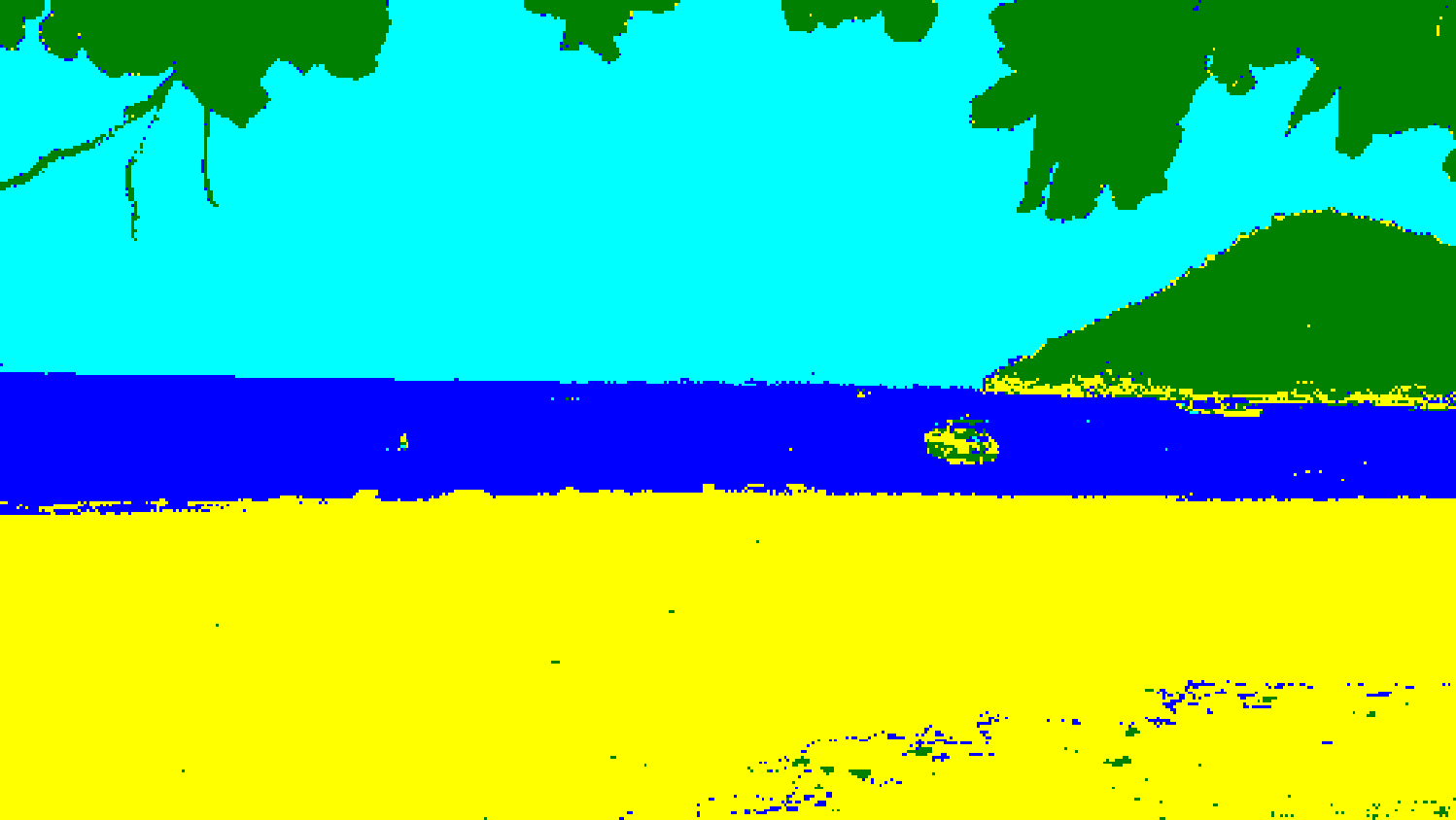}
 	}
 	\subfloat[$xy$ layer]{
 		\includegraphics[width=.49\textwidth]{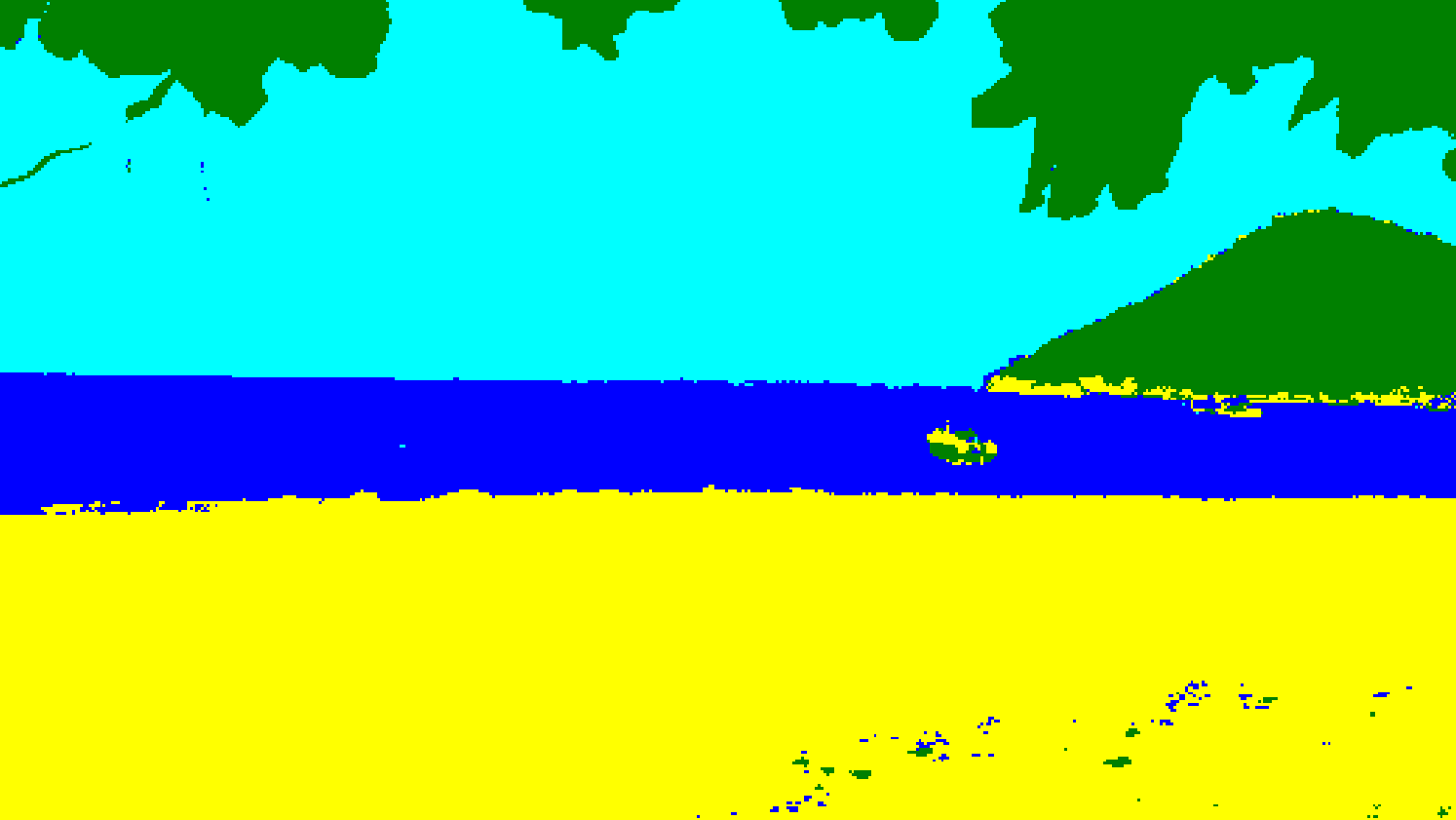}
 	}
 	\caption{\texttt{MPBTV} community detection result for the two layers of the multiplex network corresponding to the image from \Cref{fig:beach_image}.
 		The corresponding multiplex modularity is $0.973$, the classification accuracy $0.976$, and the NMI score $0.909$.}\label{fig:beach_MPBTV_result}
 \end{figure}
 
 \Cref{fig:beach_MPBTV_result} shows the classification result of \texttt{MPBTV} on both layers of the multiplex network corresponding to \Cref{fig:beach_image_im}.
 The $2.4\%$ classification error arises from the systematically misclassified ``swimming object'' pixels as well as some misclassifications on the border between shadow and sand in the ``beach'' class in the bottom right corner of the image.
 The partition shown in \Cref{fig:beach_MPBTV_result} is very similar to results reported in the semi-supervised setting of \cite{bergermann2021semi} albeit having no access to \emph{a priori} class information apart from the choice of a resolution parameter that promotes the detection of a suitable number of communities.
 
 \begin{table}
 	\begin{center}
 		\begin{tabular}{lccc}
 			\hline\hline
 			&Modularity & Accuracy & NMI\\\hline\hline
 			\texttt{MPBTV} & 0.957 & \second{0.969} & \second{0.892}\\
 			\dir & 0.954 & \first{0.973} & \first{0.899}\\\hline
 			GenLouvain \cite{mucha2010community} & \first{0.960} & 0.925 & 0.838\\
 			Leiden MP \cite{traag2019louvain}& \second{0.958} & 0.915 & 0.804\\
 			LART \cite{kuncheva2015community} & 0.929 & 0.380 & 0\\
 			MDLP \cite{boutemine2017mining} & 0.835 & 0.268 & 0.525\\\hline\hline
 		\end{tabular}
 	\end{center}
 	\caption{Comparison of multiplex community detection methods by different metrics for a downscaled version of \Cref{fig:beach_image_im} of size $37\times 65$.}\label{tab:beach_64th_comparison}
 \end{table}
 
 \begin{figure}[t]
 	\centering
 	\subfloat[Offline runtimes in seconds]{
 		\includegraphics[width=0.45\textwidth]{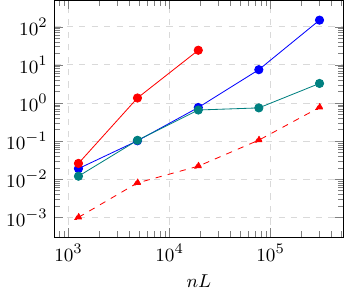}\label{fig:runtimes_beach_image_offline}
%
%
%
%
%
 	}
 	\hfill
 	\subfloat[Runtimes per run in seconds]{
 		\includegraphics[width=0.45\textwidth]{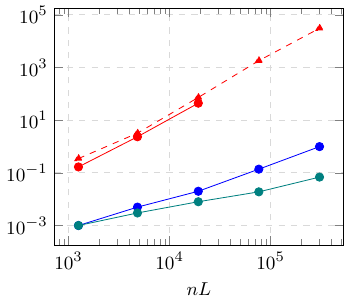}\label{fig:runtimes_beach_image_per_run}
%
%
%
%
%
 	}
 
 	\vspace{5pt}
 	\includegraphics[width=0.8\textwidth]{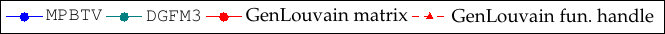}
 	\vspace{-5pt}
 	\caption{Runtimes of the three methods implemented in \texttt{Matlab} in seconds.
 		For GenLouvain, ``matrix'' denotes the version that explicitly assembles the full multiplex modularity matrix $\bm{M}$ while ``fun.\ handle'' denotes the function handle version that provides access to individual columns of $\bm{M}$.
 		Offline runtimes of \texttt{MPBTV} and \dir~correspond to eigenvalue computations.
 		Offline runtimes of GenLouvain matrix correspond to the assembling of the multiplex modularity matrix and that of GenLouvain fun.\ handle to the set-up of the function handle.
 		The assembling of the multiplex modularity matrix in GenLouvain matrix exceeds the available $16$GB of memory for $nL>19\,240$.
 		Runtimes ``per run'' of \texttt{MPBTV} and \dir~correspond to the solution of \eqref{eq:ODE} and \eqref{eq:ODE_dir}, respectively, for one initial condition while for GenLouvain they refer to the execution of the method for one randomized node-layer pair ordering.
 		All runtimes are averaged over $10$ independent code executions (with the exception of the ``per run'' runtime of GenLouvain fun.\ handle for $nL=304\,720$, which is only averaged over $5$ independent code executions for time reasons).}\label{fig:runtimes_beach_image}
 \end{figure}
 
 The size $nL=304\,720$ of the matrices involved in \Cref{fig:beach_MPBTV_result} prohibits the application of the \texttt{python} implementations of Infomap, LART, and MDLP as well as the full matrix version of GenLouvain due to memory constraints and the runtime of the function handle version of GenLouvain is impractically long, cf.~\Cref{fig:runtimes_beach_image}.
 In order to allow a comparison of the methods, we consider a version of \Cref{fig:beach_image} that is downscaled to $37\times 65$ pixels leading to $nL=4\,810$.
 Apart from Infomap, which did not produce valid partitions, we report multiplex modularities, classification accuracies, and NMIs of all methods in \Cref{tab:beach_64th_comparison}.
 Similarly to the results presented in \Cref{sec:numerical_experiments_GT_networks}, \texttt{MPBTV} and \dir~obtain the highest classification accuracies and NMIs albeit ranging slightly below GenLouvain in terms of multiplex modularity.
 GenLouvain classification results across different image resolutions systematically misclassify the shadow on the beach in the bottom-right corner of the image as ``tree''.
 
 Finally, we consider the scaling of the computational complexity of \texttt{MPBTV} and \dir~in comparison with GenLouvain on multiplex networks corresponding to different resolutions of \Cref{fig:beach_image}.
 We include the full matrix version (``GenLouvain matrix''), which explicitly assembles the full multiplex modularity matrix $\bm{M}$ as long as it can be stored in memory as well as the function handle version (``GenLouvain fun.\ handle''), which provides access to individual columns of $\bm{M}$ making it independent of memory restrictions.
 The eigenvalue and -vector computations of \texttt{MPBTV} and \dir~rely on function handles that merely require access to the (sparse) supra-adjacency matrix.
 
 \Cref{fig:runtimes_beach_image_offline} shows offline runtimes that are only required once for any given experiment since they denote the eigenvalue and -vector computations for \texttt{MPBTV} and \dir~as well as the assembling of $\bm{M}$ or the function handle for GenLouvain.
 \Cref{fig:runtimes_beach_image_per_run} shows runtimes per run, i.e., the solution of \eqref{eq:ODE} or \eqref{eq:ODE_dir} for one initial condition $\bm{U}_0$ or the execution of GenLouvain for one randomized node-layer pair ordering, which is required $n_{\mathrm{runs}}$ times.
 \Cref{fig:runtimes_beach_image} shows that the runtime of GenLouvain is dominated by the ``per run'' runtimes while for \texttt{MPBTV} and \dir~the most demanding part are the offline computations.
 Even for the largest considered example with $nL=304\,720$, the ``per run'' runtimes of \texttt{MPBTV} and \dir~only range at or below $1$ second.
 The ``per run'' runtimes of GenLouvain not only range orders of magnitude above those of \texttt{MPBTV} and \dir~for all considered problem sizes, but exhibit an approximately quadratic scaling while \texttt{MPBTV} and \dir~scale almost linearly.
 In the example of \Cref{fig:beach_MPBTV_result}, which corresponds to the right-most dots in \Cref{fig:runtimes_beach_image} with $n_{\mathrm{runs}}=100$, the factor between overall runtimes of \texttt{MPBTV} and GenLouvain is of order $10^5$.

\section{Conclusion and Outlook}
 
 We presented two methods for community detection in multiplex networks based on gradient flows of suitable energy functionals that formally maximize multiplex modularity.
 Partitions obtained by our methods often show higher classification accuracies and NMIs although ranging slightly below multiplex modularities of the best competitor.
 Our efficient numerical treatment allows runtime gains up to several orders of magnitudes.
 
 We observed, however, that our methods struggle to accurately identify relatively small communities.
 Besides addressing this point, future work could investigate, which insights our approach yields into large high-dimensional problems such as hyperspectral imaging data sets.
 Moreover, closely related recent computational approaches \cite{budd2021classification,li2024mbo} for un- and semi-supervised node classification could be generalized to the multiplex case.

\enlargethispage{20pt}

\dataccess{\texttt{Matlab} codes for replicating the numerical experiments are publicly available under \url{https://github.com/KBergermann/GradFlowModMax}.}

\aiuse{We have not used AI-assisted technologies in creating this article.}

\aucontribute{K.B.: Conceptualization; Data curation; Formal analysis; Investigation; Methodology; Software; Visualization; Writing – original draft; M.S.: Conceptualization; Investigation; Methodology; Supervision; Validation; Writing – review \& editing}

\competing{We declare we have no competing interests.}

\funding{Not applicable.}

\ack{We thank Zachary Boyd for sharing the \texttt{Matlab} implementation of the single-layer Balanced TV method \cite{boyd2018simplified}.}


\vskip2pc


%
%
%
%
%
%
%
%
%


\begin{thebibliography}{99}
	
	\bibitem{newman2003structure}
	Newman ME. 2003  The structure and function of complex networks. {\em SIAM
		Rev.} \textbf{45}, 167--256.
	(\href{http://dx.doi.org/10.1137/S003614450342480}{10.1137/S003614450342480})
	
	\bibitem{boccaletti2006complex}
	Boccaletti S, Latora V, Moreno Y, Chavez M, Hwang DU. 2006  Complex networks:
	{S}tructure and dynamics. {\em Phys. Rep.} \textbf{424}, 175--308.
	(\href{http://dx.doi.org/10.1016/j.physrep.2005.10.009}{10.1016/j.physrep.2005.10.009})
	
	\bibitem{estrada2012structure}
	Estrada E. 2012 {\em The {S}tructure of {C}omplex {N}etworks: {T}heory and
		{A}pplications}.
	American Chemical Society.
	(\href{http://dx.doi.org/10.1093/acprof:oso/9780199591756.001.0001}{10.1093/acprof:oso/9780199591756.001.0001})
	
	\bibitem{kivela2014multilayer}
	Kivel{\"a} M, Arenas A, Barthelemy M, Gleeson JP, Moreno Y, Porter MA. 2014
	Multilayer networks. {\em J. Complex Netw.} \textbf{2}, 203--271.
	(\href{http://dx.doi.org/10.1093/comnet/cnu016}{10.1093/comnet/cnu016})
	
	\bibitem{boccaletti2014structure}
	Boccaletti S, Bianconi G, Criado R, Del~Genio CI, G{\'o}mez-Gardenes J, Romance
	M, Sendina-Nadal I, Wang Z, Zanin M. 2014  The structure and dynamics of
	multilayer networks. {\em Phys. Rep.} \textbf{544}, 1--122.
	(\href{http://dx.doi.org/10.1016/j.physrep.2014.07.001}{10.1016/j.physrep.2014.07.001})
	
	\bibitem{battiston2020networks}
	Battiston F, Cencetti G, Iacopini I, Latora V, Lucas M, Patania A, Young JG,
	Petri G. 2020  Networks beyond pairwise interactions: {S}tructure and
	dynamics. {\em Phys. Rep.} \textbf{874}, 1--92.
	(\href{http://dx.doi.org/10.1016/j.physrep.2020.05.004}{10.1016/j.physrep.2020.05.004})
	
	\bibitem{battiston2021physics}
	Battiston F, Amico E, Barrat A, Bianconi G, Ferraz~de Arruda G, Franceschiello
	B, Iacopini I, K{\'e}fi S, Latora V, Moreno Y et~al.. 2021  The physics of
	higher-order interactions in complex systems. {\em Nat. Phys.} \textbf{17},
	1093--1098.
	(\href{http://dx.doi.org/10.1038/s41567-021-01371-4}{10.1038/s41567-021-01371-4})
	
	\bibitem{bick2023higher}
	Bick C, Gross E, Harrington HA, Schaub MT. 2023  What are higher-order
	networks?. {\em SIAM Rev.} \textbf{65}, 686--731.
	(\href{http://dx.doi.org/10.1137/21M1414024}{10.1137/21M1414024})
	
	\bibitem{venturini2023learning}
	Venturini S, Cristofari A, Rinaldi F, Tudisco F. 2023  Learning the right
	layers a data-driven layer-aggregation strategy for semi-supervised learning
	on multilayer graphs. In {\em International Conference on Machine Learning}
	pp. 35006--35023. PMLR.
	
	\bibitem{bergermann2024nonlinear}
	Bergermann K, Stoll M, Tudisco F. 2024  A nonlinear spectral core-periphery
	detection method for multiplex networks. {\em Proc. R. Soc. A.} \textbf{480},
	20230914.
	(\href{http://dx.doi.org/10.1098/rspa.2023.0914}{10.1098/rspa.2023.0914})
	
	\bibitem{bergermann2022fast}
	Bergermann K, Stoll M. 2022  Fast computation of matrix function-based
	centrality measures for layer-coupled multiplex networks. {\em Phys. Rev. E}
	\textbf{105}, 034305.
	(\href{http://dx.doi.org/10.1103/PhysRevE.105.034305}{10.1103/PhysRevE.105.034305})
	
	\bibitem{mucha2010community}
	Mucha PJ, Richardson T, Macon K, Porter MA, Onnela JP. 2010  Community
	structure in time-dependent, multiscale, and multiplex networks. {\em
		Science} \textbf{328}, 876--878.
	(\href{http://dx.doi.org/10.1126/science.1184819}{10.1126/science.1184819})
	
	\bibitem{de2015identifying}
	De~Domenico M, Lancichinetti A, Arenas A, Rosvall M. 2015  Identifying modular
	flows on multilayer networks reveals highly overlapping organization in
	interconnected systems. {\em Phys. Rev. X} \textbf{5}, 011027.
	(\href{http://dx.doi.org/10.1103/PhysRevX.5.011027}{10.1103/PhysRevX.5.011027})
	
	\bibitem{boutemine2017mining}
	Boutemine O, Bouguessa M. 2017  Mining community structures in multidimensional
	networks. {\em ACM Transactions on Knowledge Discovery from Data (TKDD)}
	\textbf{11}, 1--36.
	(\href{http://dx.doi.org/10.1145/3080574}{10.1145/3080574})
	
	\bibitem{kuncheva2015community}
	Kuncheva Z, Montana G. 2015  Community detection in multiplex networks using
	locally adaptive random walks. In {\em Proceedings of the 2015 IEEE/ACM
		International Conference on Advances in Social Networks Analysis and Mining
		2015} pp. 1308--1315.
	(\href{http://dx.doi.org/10.1145/2808797.2808852}{10.1145/2808797.2808852})
	
	\bibitem{taylor2021tunable}
	Taylor D, Porter MA, Mucha PJ. 2021  Tunable eigenvector-based centralities for
	multiplex and temporal networks. {\em Multiscale Model. Simul.} \textbf{19},
	113--147.
	(\href{http://dx.doi.org/10.1137/19M1262632}{10.1137/19M1262632})
	
	\bibitem{bergermann2024core}
	Bergermann K, Tudisco F. 2024  Core-periphery detection in multilayer networks.
	{\em In preparation}.
	
	\bibitem{girvan2002community}
	Girvan M, Newman ME. 2002  Community structure in social and biological
	networks. {\em Proc. Natl. Acad. Sci. USA} \textbf{99}, 7821--7826.
	(\href{http://dx.doi.org/10.1073/pnas.122653799}{10.1073/pnas.122653799})
	
	\bibitem{newman2004finding}
	Newman ME, Girvan M. 2004  Finding and evaluating community structure in
	networks. {\em Phys. Rev. E} \textbf{69}, 026113.
	(\href{http://dx.doi.org/10.1103/PhysRevE.69.026113}{10.1103/PhysRevE.69.026113})
	
	\bibitem{newman2006finding}
	Newman ME. 2006a  Finding community structure in networks using the
	eigenvectors of matrices. {\em Phys. Rev. E} \textbf{74}, 036104.
	(\href{http://dx.doi.org/10.1103/PhysRevE.74.036104}{10.1103/PhysRevE.74.036104})
	
	\bibitem{newman2006modularity}
	Newman ME. 2006b  Modularity and community structure in networks. {\em Proc.
		Natl. Acad. Sci. USA} \textbf{103}, 8577--8582.
	(\href{http://dx.doi.org/10.1073/pnas.0601602103}{10.1073/pnas.0601602103})
	
	\bibitem{pons2006computing}
	Pons P, Latapy M. 2006  Computing communities in large networks using random
	walks. {\em J. Graph Algorithms Appl.} \textbf{10}, 191--218.
	(\href{http://dx.doi.org/10.1007/11569596\_31}{10.1007/11569596\_31})
	
	\bibitem{rosvall2008maps}
	Rosvall M, Bergstrom CT. 2008  Maps of random walks on complex networks reveal
	community structure. {\em Proc. Natl. Acad. Sci. USA} \textbf{105},
	1118--1123.
	(\href{http://dx.doi.org/10.1073/pnas.0706851105}{10.1073/pnas.0706851105})
	
	\bibitem{von2007tutorial}
	Von~Luxburg U. 2007  A tutorial on spectral clustering. {\em Stat. Comput.}
	\textbf{17}, 395--416.
	(\href{http://dx.doi.org/10.1007/s11222-007-9033-z}{10.1007/s11222-007-9033-z})
	
	\bibitem{fortunato2010community}
	Fortunato S. 2010  Community detection in graphs. {\em Phys. Rep.}
	\textbf{486}, 75--174.
	(\href{http://dx.doi.org/10.1016/j.physrep.2009.11.002}{10.1016/j.physrep.2009.11.002})
	
	\bibitem{brandes2007modularity}
	Brandes U, Delling D, Gaertler M, Gorke R, Hoefer M, Nikoloski Z, Wagner D.
	2007  On modularity clustering. {\em IEEE Transactions on Knowledge and Data
		Engineering} \textbf{20}, 172--188.
	(\href{http://dx.doi.org/10.1109/TKDE.2007.190689}{10.1109/TKDE.2007.190689})
	
	\bibitem{blondel2008fast}
	Blondel VD, Guillaume JL, Lambiotte R, Lefebvre E. 2008  Fast unfolding of
	communities in large networks. {\em J. Stat. Mech. Theory Exp.}
	\textbf{2008}, P10008.
	(\href{http://dx.doi.org/10.1088/1742-5468/2008/10/P10008}{10.1088/1742-5468/2008/10/P10008})
	
	\bibitem{traag2019louvain}
	Traag VA, Waltman L, Van~Eck NJ. 2019  From {L}ouvain to {L}eiden: guaranteeing
	well-connected communities. {\em Sci. Rep.} \textbf{9}, 5233.
	(\href{http://dx.doi.org/10.1038/s41598-019-41695-z}{10.1038/s41598-019-41695-z})
	
	\bibitem{venturini2022variance}
	Venturini S, Cristofari A, Rinaldi F, Tudisco F. 2022  A variance-aware
	multiobjective {L}ouvain-like method for community detection in multiplex
	networks. {\em J. Complex Netw.} \textbf{10}, cnac048.
	(\href{http://dx.doi.org/10.1093/comnet/cnac048}{10.1093/comnet/cnac048})
	
	\bibitem{hu2013method}
	Hu H, Laurent T, Porter MA, Bertozzi AL. 2013  A method based on total
	variation for network modularity optimization using the {MBO} scheme. {\em
		SIAM J. Appl. Math.} \textbf{73}, 2224--2246.
	(\href{http://dx.doi.org/10.1137/130917387}{10.1137/130917387})
	
	\bibitem{boyd2018simplified}
	Boyd ZM, Bae E, Tai XC, Bertozzi AL. 2018  Simplified energy landscape for
	modularity using total variation. {\em SIAM J. Appl. Math.} \textbf{78},
	2439--2464.
	(\href{http://dx.doi.org/10.1137/17M1138972}{10.1137/17M1138972})
	
	\bibitem{tudisco2018community}
	Tudisco F, Mercado P, Hein M. 2018  Community detection in networks via
	nonlinear modularity eigenvectors. {\em SIAM J. Appl. Math.} \textbf{78},
	2393--2419.
	(\href{http://dx.doi.org/10.1137/17M1144143}{10.1137/17M1144143})
	
	\bibitem{zhang2018sparse}
	Zhang J, Liu H, Wen Z, Zhang S. 2018  A sparse completely positive relaxation
	of the modularity maximization for community detection. {\em SIAM J. Sci.
		Comput.} \textbf{40}, A3091--A3120.
	(\href{http://dx.doi.org/10.1137/17M1141904}{10.1137/17M1141904})
	
	\bibitem{cristofari2020total}
	Cristofari A, Rinaldi F, Tudisco F. 2020  Total variation based community
	detection using a nonlinear optimization approach. {\em SIAM J. Appl. Math.}
	\textbf{80}, 1392--1419.
	(\href{http://dx.doi.org/10.1137/19M1270446}{10.1137/19M1270446})
	
	\bibitem{delvenne2010stability}
	Delvenne JC, Yaliraki SN, Barahona M. 2010  Stability of graph communities
	across time scales. {\em Proc. Natl. Acad. Sci. USA} \textbf{107},
	12755--12760.
	(\href{http://dx.doi.org/10.1073/pnas.0903215107}{10.1073/pnas.0903215107})
	
	\bibitem{lambiotte2014random}
	Lambiotte R, Delvenne JC, Barahona M. 2014  Random walks, {M}arkov processes
	and the multiscale modular organization of complex networks. {\em IEEE Trans.
		Network Sci. Eng.} \textbf{1}, 76--90.
	(\href{http://dx.doi.org/10.1109/TNSE.2015.2391998}{10.1109/TNSE.2015.2391998})
	
	\bibitem{lambiotte2021modularity}
	Lambiotte R, Schaub MT. 2021 {\em Modularity and {D}ynamics on {C}omplex
		{N}etworks}.
	Cambridge University Press.
	(\href{http://dx.doi.org/10.1017/9781108774116}{10.1017/9781108774116})
	
	\bibitem{newman2016equivalence}
	Newman ME. 2016  Equivalence between modularity optimization and maximum
	likelihood methods for community detection. {\em Phys. Rev. E} \textbf{94},
	052315.
	(\href{http://dx.doi.org/10.1103/PhysRevE.94.052315}{10.1103/PhysRevE.94.052315})
	
	\bibitem{bertozzi2012diffuse}
	Bertozzi AL, Flenner A. 2012  Diffuse interface models on graphs for
	classification of high dimensional data. {\em Multiscale Model. Simul.}
	\textbf{10}, 1090--1118.
	(\href{http://dx.doi.org/10.1137/16M1070426}{10.1137/16M1070426})
	
	\bibitem{garcia2014multiclass}
	Garcia-Cardona C, Merkurjev E, Bertozzi AL, Flenner A, Percus AG. 2014
	Multiclass data segmentation using diffuse interface methods on graphs. {\em
		IEEE Transactions on Pattern Analysis and Machine Intelligence} \textbf{36},
	1600--1613.
	(\href{http://dx.doi.org/10.1109/TPAMI.2014.2300478}{10.1109/TPAMI.2014.2300478})
	
	\bibitem{budd2021classification}
	Budd J, van Gennip Y, Latz J. 2021  Classification and image processing with a
	semi-discrete scheme for fidelity forced {A}llen--{C}ahn on graphs. {\em
		GAMM-Mitt.} \textbf{44}, e202100004.
	(\href{http://dx.doi.org/10.1002/gamm.202100004}{10.1002/gamm.202100004})
	
	\bibitem{bergermann2021semi}
	Bergermann K, Stoll M, Volkmer T. 2021  Semi-supervised learning for aggregated
	multilayer graphs using diffuse interface methods and fast matrix-vector
	products. {\em SIAM J. Math. Data Sci.} \textbf{3}, 758--785.
	(\href{http://dx.doi.org/10.1137/20M1352028}{10.1137/20M1352028})
	
	\bibitem{jutla2011generalized}
	Jeub LG, Bazzi M, Jutla IS, Mucha PJ. 2011--2019  A generalized Louvain method
	for community detection implemented in MATLAB. {\em
		https://github.com/GenLouvain/GenLouvain}.
	
	\bibitem{taylor2016enhanced}
	Taylor D, Shai S, Stanley N, Mucha PJ. 2016  Enhanced detectability of
	community structure in multilayer networks through layer aggregation. {\em
		Phys. Rev. Lett.} \textbf{116}, 228301.
	(\href{http://dx.doi.org/10.1103/PhysRevLett.116.228301}{10.1103/PhysRevLett.116.228301})
	
	\bibitem{pamfil2019relating}
	Pamfil AR, Howison SD, Lambiotte R, Porter MA. 2019  Relating modularity
	maximization and stochastic block models in multilayer networks. {\em SIAM J.
		Math. Data Sci.} \textbf{1}, 667--698.
	(\href{http://dx.doi.org/10.1137/18M1231304}{10.1137/18M1231304})
	
	\bibitem{mercado2018power}
	Mercado P, Gautier A, Tudisco F, Hein M. 2018  The power mean {L}aplacian for
	multilayer graph clustering. In {\em International Conference on Artificial
		Intelligence and Statistics} pp. 1828--1838. PMLR.
	
	\bibitem{bazzi2016community}
	Bazzi M, Porter MA, Williams S, McDonald M, Fenn DJ, Howison SD. 2016
	Community detection in temporal multilayer networks, with an application to
	correlation networks. {\em Multiscale Model. Simul.} \textbf{14}, 1--41.
	(\href{http://dx.doi.org/10.1137/15M1009615}{10.1137/15M1009615})
	
	\bibitem{magnani2021community}
	Magnani M, Hanteer O, Interdonato R, Rossi L, Tagarelli A. 2021  Community
	detection in multiplex networks. {\em ACM Computing Surveys (CSUR)}
	\textbf{54}, 1--35.
	(\href{http://dx.doi.org/10.1145/3444688}{10.1145/3444688})
	
	\bibitem{huang2021survey}
	Huang X, Chen D, Ren T, Wang D. 2021  A survey of community detection methods
	in multilayer networks. {\em Data Min. Knowl. Discov.} \textbf{35}, 1--45.
	(\href{http://dx.doi.org/10.1007/s10618-020-00716-6}{10.1007/s10618-020-00716-6})
	
	\bibitem{de2013mathematical}
	De~Domenico M, Sol{\'e}-Ribalta A, Cozzo E, Kivel{\"a} M, Moreno Y, Porter MA,
	G{\'o}mez S, Arenas A. 2013  Mathematical formulation of multilayer networks.
	{\em Phys. Rev. X} \textbf{3}, 041022.
	(\href{http://dx.doi.org/10.1103/PhysRevX.3.041022}{10.1103/PhysRevX.3.041022})
	
	\bibitem{reichardt2006statistical}
	Reichardt J, Bornholdt S. 2006  Statistical mechanics of community detection.
	{\em Phys. Rev. E} \textbf{74}, 016110.
	(\href{http://dx.doi.org/10.1103/PhysRevE.74.016110}{10.1103/PhysRevE.74.016110})
	
	\bibitem{fortunato2007resolution}
	Fortunato S, Barthelemy M. 2007  Resolution limit in community detection. {\em
		Proc. Natl. Acad. Sci. USA} \textbf{104}, 36--41.
	(\href{http://dx.doi.org/10.1073/pnas.0605965104}{10.1073/pnas.0605965104})
	
	\bibitem{chung2002average}
	Chung F, Lu L. 2002a  The average distances in random graphs with given
	expected degrees. {\em Proc. Natl. Acad. Sci. USA} \textbf{99}, 15879--15882.
	(\href{http://dx.doi.org/10.1073/pnas.252631999}{10.1073/pnas.252631999})
	
	\bibitem{chung2002connected}
	Chung F, Lu L. 2002b  Connected components in random graphs with given expected
	degree sequences. {\em Ann. Comb.} \textbf{6}, 125--145.
	(\href{http://dx.doi.org/10.1007/PL00012580}{10.1007/PL00012580})
	
	\bibitem{arenas2008analysis}
	Arenas A, Fernandez A, Gomez S. 2008  Analysis of the structure of complex
	networks at different resolution levels. {\em New J. Phys.} \textbf{10},
	053039.
	(\href{http://dx.doi.org/10.1088/1367-2630/10/5/053039}{10.1088/1367-2630/10/5/053039})
	
	\bibitem{ronhovde2010local}
	Ronhovde P, Nussinov Z. 2010  Local resolution-limit-free Potts model for
	community detection. {\em Phys. Rev. E} \textbf{81}, 046114.
	(\href{http://dx.doi.org/10.1103/PhysRevE.81.046114}{10.1103/PhysRevE.81.046114})
	
	\bibitem{traag2011narrow}
	Traag VA, Van~Dooren P, Nesterov Y. 2011  Narrow scope for
	resolution-limit-free community detection. {\em Phys. Rev. E} \textbf{84},
	016114.
	(\href{http://dx.doi.org/10.1103/PhysRevE.84.016114}{10.1103/PhysRevE.84.016114})
	
	\bibitem{fasino2016generalized}
	Fasino D, Tudisco F. 2016  Generalized modularity matrices. {\em Linear Algebra
		Appl.} \textbf{502}, 327--345.
	(\href{http://dx.doi.org/10.1016/j.laa.2015.06.013}{10.1016/j.laa.2015.06.013})
	
	\bibitem{bergermann2021orientations}
	Bergermann K, Stoll M. 2021  Orientations and matrix function-based
	centralities in multiplex network analysis of urban public transport. {\em
		Appl. Netw. Sci.} \textbf{6}, 1--33.
	(\href{http://dx.doi.org/10.1007/s41109-021-00429-9}{10.1007/s41109-021-00429-9})
	
	\bibitem{good2010performance}
	Good BH, De~Montjoye YA, Clauset A. 2010  Performance of modularity
	maximization in practical contexts. {\em Phys. Rev. E} \textbf{81}, 046106.
	(\href{http://dx.doi.org/10.1103/PhysRevE.81.046106}{10.1103/PhysRevE.81.046106})
	
	\bibitem{van2012gamma}
	Van~Gennip Y, Bertozzi AL. 2012  $\Gamma$-convergence of graph Ginzburg-Landau
	functionals. {\em Adv. Differential Equations} \textbf{17}, 1115--1180.
	
	\bibitem{gomez2013diffusion}
	Gomez S, Diaz-Guilera A, Gomez-Gardenes J, Perez-Vicente CJ, Moreno Y, Arenas
	A. 2013  Diffusion dynamics on multiplex networks. {\em Phys. Rev. Lett.}
	\textbf{110}, 028701.
	(\href{http://dx.doi.org/10.1103/PhysRevLett.110.028701}{10.1103/PhysRevLett.110.028701})
	
	\bibitem{merriman1994motion}
	Merriman B, Bence JK, Osher SJ. 1994  Motion of multiple junctions: {A} level
	set approach. {\em J. Comput. Phys.} \textbf{112}, 334--363.
	(\href{http://dx.doi.org/10.1006/jcph.1994.1105}{10.1006/jcph.1994.1105})
	
	\bibitem{merkurjev2013mbo}
	Merkurjev E, Kostic T, Bertozzi AL. 2013  An {MBO} scheme on graphs for
	classification and image processing. {\em SIAM J. Imaging Sci.} \textbf{6},
	1903--1930.
	(\href{http://dx.doi.org/10.1137/120886935}{10.1137/120886935})
	
	\bibitem{higham2008functions}
	Higham NJ. 2008 {\em Functions of {M}atrices: {T}heory and {C}omputation}.
	SIAM.
	(\href{http://dx.doi.org/10.1137/1.9780898717778}{10.1137/1.9780898717778})
	
	\bibitem{stewart2002krylov}
	Stewart GW. 2002  A {K}rylov--{S}chur algorithm for large eigenproblems. {\em
		SIAM J. Matrix Anal. Appl.} \textbf{23}, 601--614.
	(\href{http://dx.doi.org/10.1137/S0895479800371529}{10.1137/S0895479800371529})
	
	\bibitem{golub2013matrix}
	Golub GH, Van~Loan CF. 2013 {\em Matrix {C}omputations}.
	JHU press.
	(\href{http://dx.doi.org/10.1137/1.9781421407944}{10.1137/1.9781421407944})
	
	\bibitem{chen2011projection}
	Chen Y, Ye X. 2011  Projection onto a simplex. {\em arXiv preprint
		arXiv:1101.6081}.
	(\href{http://dx.doi.org/10.48550/arXiv.1101.6081}{10.48550/arXiv.1101.6081})
	
	\bibitem{strehl2002cluster}
	Strehl A, Ghosh J. 2002  Cluster ensembles---a knowledge reuse framework for
	combining multiple partitions. {\em J. Mach. Learn. Res.} \textbf{3},
	583--617.
	
	\bibitem{liu2013multi}
	Liu J, Wang C, Gao J, Han J. 2013  Multi-view clustering via joint nonnegative
	matrix factorization. In {\em Proceedings of the 2013 SIAM International
		Conference on Data Mining} pp. 252--260. SIAM.
	(\href{http://dx.doi.org/10.1137/1.9781611972832.28}{10.1137/1.9781611972832.28})
	
	\bibitem{greene2005producing}
	Greene D, Cunningham P. 2005  Producing accurate interpretable clusters from
	high-dimensional data. In {\em European Conference on Principles of Data
		Mining and Knowledge Discovery} pp. 486--494. Springer.
	(\href{http://dx.doi.org/10.1007/11564126\_49}{10.1007/11564126\_49})
	
	\bibitem{greene2009matrix}
	Greene D, Cunningham P. 2009  A matrix factorization approach for integrating
	multiple data views. In {\em Joint European Conference on Machine Learning
		and Knowledge Discovery in Databases} pp. 423--438. Springer.
	(\href{http://dx.doi.org/10.1007/978-3-642-04180-8\_45}{10.1007/978-3-642-04180-8\_45})
	
	\bibitem{lu2003link}
	Lu Q, Getoor L. 2003  Link-based classification. In {\em Proceedings of the
		Twentieth International Conference on International Conference on Machine
		Learning} pp. 496--503. AAAI Press.
	(\href{http://dx.doi.org/10.5555/3041838.3041901}{10.5555/3041838.3041901})
	
	\bibitem{mccallum2000automating}
	McCallum AK, Nigam K, Rennie J, Seymore K. 2000  Automating the construction of
	internet portals with machine learning. {\em Information Retrieval}
	\textbf{3}, 127--163.
	(\href{http://dx.doi.org/10.1023/A:1009953814988}{10.1023/A:1009953814988})
	
	\bibitem{craven1998learning}
	Craven M, DiPasquo D, Freitag D, McCallum A, Mitchell T, Nigam K, Slattery S.
	1998  Learning to extract symbolic knowledge from the World Wide Web. {\em
		Proceedings of the fifteenth national/tenth conference on Artificial
		intelligence/Innovative applications of artificial intelligence} \textbf{3},
	2.
	
	\bibitem{rasiwasia2010new}
	Rasiwasia N, Costa~Pereira J, Coviello E, Doyle G, Lanckriet GR, Levy R,
	Vasconcelos N. 2010  A new approach to cross-modal multimedia retrieval. In
	{\em Proceedings of the 18th ACM international conference on Multimedia} pp.
	251--260.
	(\href{http://dx.doi.org/10.1145/1873951.1873987}{10.1145/1873951.1873987})
	
	\bibitem{li2024mbo}
	Li Z, van Gennip Y, John V. 2024  An MBO method for modularity optimisation
	based on total variation and signless total variation. {\em European J. Appl.
		Math.} pp. 1--83.
	(\href{http://dx.doi.org/10.1017/S095679252400072X}{10.1017/S095679252400072X})
	
\end{thebibliography}

\end{document}